\numberwithin{equation}{section}
\newcounter{dummy}
\newcommand\myitem[1][]{\item[#1]\refstepcounter{dummy}\def\@currentlabel{#1}}
\newtheorem{thm}{Theorem}
\numberwithin{thm}{section}
\newtheorem{lem}[thm]{Lemma}
\newtheorem{defi}[thm]{Definition}
\newtheorem{thmintro}{Theorem}
\newtheorem*{thm*}{Theorem}
\newtheorem*{prop*}{Proposition}
\numberwithin{equation}{section}
\theoremstyle{remark}
\newtheorem{rem}[thm]{Remark}
\newcommand{\V}{\vert}
\newcommand{\R}{\mathbb{R}}
\newcommand{\N}{\mathbb{N}}
\newcommand{\LL}{\mathcal{L}}
\newcommand{\conv}{\mathrm{conv}}
\newcommand{\dx}{\,\textup{d}x}
\newcommand{\dt}{\,\textup{d}t}
\newcommand{\dy}{\,\textup{d}y}
\newcommand{\dz}{\,\textup{d}z}
\newcommand{\ds}{\,\textup{d}s}
\newcommand{\dr}{\,\textup{d}r}
\newcommand{\Cor}{\mathrm{Cor}}
\newcommand{\dH}{\,\textup{d}\mathcal{H}}
\newcommand{\dL}{\,\textup{d}\mathcal{L}}
\newcommand{\Haus}{\mathcal{H}}
\newcommand{\dmu}{\,\textup{d}\mu}
\renewcommand{\phi}{\varphi}
\DeclareMathOperator{\curl}{curl}
\DeclareMathOperator{\skw}{skew}
\DeclareMathOperator{\dist}{dist}
\DeclareMathOperator{\divergence}{div}
\DeclareMathOperator{\sgn}{sgn}
\DeclareMathOperator{\Lip}{Lip}
\DeclareMathOperator{\Sim}{Sim}
\newcommand{\M}{\mathscr{M}}
\definecolor{Gump}{rgb}{0,0.6,0.4}
\definecolor{Hanks}{rgb}{0.7,0.3,0.1}
\begin{document}
\title[Solenoidal truncation]{An alternative approach to solenoidal Lipschitz truncation}
\author[Schiffer]{Stefan Schiffer}
\address{Insitute for Applied Mathematics, University of Bonn} 
\email{schiffer@iam.uni-bonn.de}
 \subjclass[2010]{}
 \keywords{} 
\begin{abstract}
In this work, a new approach to obtain a solenoidal Lipschitz truncation is presented. More precisely, the goal of the truncation is to modify a function $u \in W^{1,p}(\mathbb{R}^3,\mathbb{R}^3)$ that satisfies the additional constraint $\mathrm{div}~  u=0$, such that its modification $\tilde{u}$ is in $W^{1,\infty}(\mathbb{\R}^3,\mathbb{R}^3)$ and still is divergence-free. We give an alternative approach to Lipschitz truncation compared to previous works by Breit, Diening \& Fuchs \cite{BDF} and Breit, Diening \& Schwarzacher \cite{BDS}. The ansatz pursued here allows a rather strict bound on the $W^{1,p}$ distance of $u$ and $\tilde{u}$.

\end{abstract}
\maketitle
\section{Introduction} \label{sec:intro}
\subsection{Lipschitz extensions and truncations}
A technique that is important both in functional analytic results and in applications to partial differential equations (PDEs) is \emph{Lipschitz extension}. More precisely, take a metric space $(X,d)$, a closed subset $Y \subset X$ and a function $u \colon Y \to \R^d$ that is Lipschitz continuous, i.e. there is $L>0$ such that 
    \[
        \V u(x) -u(y) \V \leq L d(x,y) \quad \forall x,y \in Y.
    \]
The aim of Lipschitz extension is to find a function $\tilde{u} \colon X \to \R^d$ that coincides with $u$ on $Y$ and still is Lipschitz continuous on $X$ with the same Lipschitz constant (or a constant that is only worse by some additional multiplicative constant).
Such an extension result has been achieved by \textsc{McShane} and \textsc{Kirszbraun} \cite{McS,Kirszbraun} and, in a slightly different setting, by \textsc{Whitney} \cite{Whitney,Stein}.

\emph{Lipschitz truncation} has a different focus. For simplicity, take $X=\R^N$ with the euclidean metric. Given a function $u \colon \R^N \to \R^d$ we want to modify $u$, such that its modification is Lipschitz continuous and (in some sense) close to $u$. The main strategy to achieve such a truncation relies on the observation that $u$ is already Lipschitz continuous when restricting to a rather large set $X$. We then keep $u$ unchanged on this set and perform a Lipschitz extension on its complement. In the context of $L^1$-functions, this large set can be described via the maximal function $\mathcal{M}$, i.e. 
    \[
    \V u(x) - u(y) \V \leq C(N) L \V x- y \V
    \]
whenever $x,y$ are contained in the set $X=\{ \mathcal{M}(D u) \leq L\}$, cf. \cite{Liu,AF}.

\subsection{Whitney's approach to extension and truncation}

The approach to Lipschitz extension/truncation that we further pursue in this work is the truncation that is due to \textsc{Whitney} \cite{Whitney}, which has been refined in the works \cite{AF,AF2,Zhang,DMS}. Given a closed subset $X \subset \R^N$ and a function $u \colon X \to \R^d$, we find a Calder\'{o}n-Zygmund decomposition (a composition into dyadic cubes) of its complement, $X^C= \bigcup_{i \in \N} Q_i$. 

\noindent For each cube $Q_i \subset X^C$, we may find a \emph{projection point} $x_i \in X$ close to the cube. Given a partition of unity $\phi_i$ of $X^C$, with $\phi \in C_c^{\infty}(Q_i)$, we define 
    \begin{equation} \label{intro:truncWhitney}
        \tilde{u}(y) = \left\{ \begin{array}{ll} \sum_{i \in \N} \phi_i(y) u(x_i) & y \in X^C \\ u(y) & y \in X \end{array} \right.
    \end{equation}
One can show (cf. \cite{Stein}), that this is indeed a Lipschitz extension from $X$ to $\R^N$. If the Lipschitz constant of $u$ restricted to $X$ is $L$, then the Lipschitz constant of $\tilde{u}$ is no worse than $C(N,d) L$ for a purely dimensional constant $C(N,d)$. In particular, $C(N,d)$ does \emph{not} depend on $X$.

The technique that is presented below in the context of \emph{solenoidal} Lipschitz truncation/extension, heavily relies on Whitney's truncation and a modification of formula \eqref{intro:truncWhitney}. 

\subsection{Solenoidal truncations}

The truncation becomes harder if we want to preserve an additional constraint, in particular $\divergence u =0$. The truncated version of $u$ shall still satisfy the constraint $\divergence \tilde{u}=0$. 
This $\divergence$-free or \emph{solenoidal} Lipschitz truncation has been extensively discussed in the context of fluid dynamics, e.g. \cite{BDF,BDS,DSS,NR,MS,BR}. Moreover, such a truncation result can be used to derive lower-semicontinuity result for functionals of the form
    \[
        I(u)=\int f(x,u(x),Du(x))
    \] 
together with an additional constraint $\divergence u=0$, cf. \cite{AF} for the unconstrained setting.

We now shortly outline the two approaches to solenoidal truncations, which are revisited in more detail in Section \ref{sec:old}. For simplicity, we stick to the physical dimension $N=3$.

The first approach, that has been advocated in \cite{BDS}, is to write a divergence-free function $u \in W^{1,p}(\R^3,\R^3)$ as 
    \[
    u = \curl U
    \]
for a function $U \in W^{2,p}(\R^3,\R^{3\times3}_{\skw})$. Then one performs a $W^{2,p}$-$W^{2,\infty}$-truncation (a higher order Lipschitz truncation) to $U$ and then sets $\tilde{u} = \curl \tilde{U}$. While this technique and the ensuing result is sufficient in the context of fluid mechanics, there is one major issue. This result does not obtain good bounds on the distance between $\tilde{u}$ and $u$. In particular, even if $u \in W^{1,\infty}$, the function $U$ is in general \emph{not} in $W^{2,\infty}$ (cf. \cite{Ornstein,DLM}). Consequently, even $u \in W^{1,\infty}$, the truncated version $\tilde{u}=\curl \tilde{U}$ that we obtain through this approach \emph{never} coincides with $u$.

The second approach, that has been executed in a slightly different setting in \cite{BDF} (for truncating symmetric gradients instead of full gradients), is more accurate. The idea is to start with \eqref{intro:truncWhitney}. This $\tilde{u}$ does not satisfy the constraint $\divergence \tilde{u}=0$. Therefore, one adds corrector terms $\mathrm{Cor}_i$ that, up to an additive constant, satisfy
    \[
    \divergence \mathrm{Cor}_i = \divergence  \left(\phi_i \sum_{j \in \N} \phi_j u(x_j)\right).
    \]
This corrector term is obtained through the \emph{Bogovski\u{\i}-operator} that solves the equation $\divergence v =f$ in a cube $Q_i$. As this technique directly works with the function $u$ and not with $U$ satisfying $\curl U= u$, this result leads to stronger bounds on the distance between $u$ and its truncated version $\tilde{u}$.

\subsection{A third approach and the main truncation result} \label{sec:intro:3}

In this paper, we pursue a third approach, that is related to the second one, but skips using the Bogovski\u{\i} operator. This approach might be useful to deal with other differential constraints or with other geometries (for example on a Riemannian manifold) than the Euclidean geometry. Note that the assumption that the geometry is flat is heavily used through bounds for the Bogovski\u{i} operator in the second approach described above.

The idea still is to obtain a divergence-free truncation from \eqref{intro:truncWhitney} by adding suitable corrector terms that achieve solenoidality. Our ansatz, however, directly gives these terms using cancellations and the fact that $\phi_i$ is a partition of unity. It turns out that we can write the truncation in the following form:
    \[
        \tilde{u}(y) = \left\{ \begin{array}{ll} \sum_{i \in \N} \phi_i(y) u(x_i) + \sum_{i,j \in \N} \phi_j D \phi_i A(i,j) + \sum_{i,j,k\in \N} \phi_k D \phi_j D \phi_i B(i,j,k) & y \in X^C, \\ u(y) & y \in X, \end{array} \right.
    \]
for suitable terms $A(i,j)$ and $B(i,j,k)$ depending on $u$ (cf. Definition \ref{defi:trunc}).

This is used to obtain the following truncation statement. 
\begin{thmintro}[Solenoidal Lipschitz truncation] \label{thmintro:A}
Let $u \in W^{1,p}(\R^3,\R^3)$ satisfy the differential constraint $\divergence u=0$. For any $L>1$, there exists a nonlinear truncation operator $T_L$ acting on $W^{1,p}(\R^3,\R^3)$ and a constant $C$, such that \begin{enumerate} [label=(\alph*)]
    \item $\divergence T_L u=0$;
    \item $\Vert T_L u \Vert_{W^{1,\infty}} \leq C L$;
    \item \label{A:3} $\Vert T_L u - u \Vert_{W^{1,p}}^p \leq C \int_{\{\V u \V \geq L\} \cup \{ \V D u\V \geq L\}} \V u \V^p + \V Du \V^p \dx$;
    \item \label{A:4} $\mathcal{L}^3 ( \{ u \neq T_L u \}) \leq C  L^{-p} \int_{\{\V u \V \geq L\} \cup \{ \V D u\V \geq L\}} \V u \V^p + \V Du \V^p \dx$.
\end{enumerate}
\end{thmintro}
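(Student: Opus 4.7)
The plan is to adapt the classical Whitney/Calder\'on-Zygmund strategy to the divergence-free setting while avoiding the Bogovski\u{\i} operator, exploiting combinatorial cancellations inherent in the partition of unity. I first fix the bad set
\[
B_L := \{M|u| > L\} \cup \{M|Du| > L\},
\]
where $M$ denotes the Hardy-Littlewood maximal operator, and let $G_L := \R^3 \setminus B_L$. Classical arguments (e.g.\ \cite{Stein,AF}) show that $u|_{G_L}$ is Lipschitz with constant $\lesssim L$ and that $|u|,|Du| \lesssim L$ on $G_L$ after modification on a null set. I then take a Whitney decomposition $B_L = \bigcup_i Q_i$ with an associated smooth partition of unity $\{\phi_i\}$ subordinate to slight dilations of the $Q_i$, and pick projection points $x_i \in G_L$ with $\dist(x_i,Q_i) \lesssim \mathrm{diam}\,Q_i$. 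On $G_L$ I set $T_L u = u$, and on $B_L$ I use the formula announced in Section \ref{sec:intro:3}.

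The key algebraic observation is that $\sum_i \phi_i \equiv 1$ on $B_L$, so $\sum_i D\phi_i \equiv 0$, and iterated use of these identities (together with $\sum_j \phi_j = 1$) lets one rewrite
\[
\divergence\Bigl(\sum_i \phi_i u(x_i)\Bigr) \;=\; \sum_{i,j} \phi_j\, D\phi_i\cdot\bigl(u(x_i)-u(x_j)\bigr),
\]
and similarly for the higher-order residual. The first corrector $A(i,j)$ is then chosen so that its divergence cancels this expression to leading order, and $B(i,j,k)$ is chosen to absorb the remainder; the natural candidates are antisymmetrized products of increments of $u$ at the projection points $x_i,x_j,x_k$, so that under the partition-of-unity identities the triple sum collapses. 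Because neighbouring Whitney cubes have comparable size and bounded overlap, at any $y \in B_L$ only a uniformly bounded number of indices $i,j,k$ contribute, and each summand is pointwise $\lesssim L$ thanks to the maximal-function bounds at the projection points; this yields (a) and (b), and also provides continuous gluing of $T_L u$ across $\partial B_L$ because $\diam Q_i \to 0$ forces $x_i \to y$ and the correctors to vanish.

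For (c), I split the $W^{1,p}$-integral as $\int_{G_L} + \int_{B_L}$: the first piece contributes nothing since $T_L u = u$ on $G_L$, while the second is bounded by the pointwise estimates $|T_L u|, |DT_L u| \lesssim L$ combined with the weak-$(1,1)$ type of $M$, which makes the integrals over $B_L$ and over $\{|u|>L\}\cup\{|Du|>L\}$ comparable up to a constant. Item (d) follows from the measure bound $|B_L| \lesssim L^{-p}\int_{\{|u|>L\}\cup\{|Du|>L\}} |u|^p + |Du|^p \dx$, which is again standard for the maximal function. The main obstacle is the precise algebraic verification that the specific $A(i,j)$ and $B(i,j,k)$ produce $\divergence T_L u = 0$ pointwise on $B_L$ and match $u$ continuously on $\partial B_L$; this requires careful bookkeeping of index symmetries and of the partition-of-unity identities, as well as checking that no uncontrolled derivatives $D^2 \phi_i$ appear when differentiating the triple-sum term. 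Once that algebra is in place, all remaining estimates are routine consequences of the Whitney decomposition and maximal-function machinery.
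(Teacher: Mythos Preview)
Your high-level architecture matches the paper exactly: define the bad set via the maximal function of $u$ and $Du$, Whitney-decompose it, keep $T_L u = u$ on the good set, and on the bad set write the extension as the standard Whitney sum plus explicit correctors indexed by pairs and triples of cubes. Your derivation of (c) and (d) from (a), (b) and the bad-set measure bound is also the same as in the paper.

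There is, however, a genuine gap in the heart of the argument. You propose that $A(i,j)$ and $B(i,j,k)$ be ``antisymmetrized products of increments of $u$ at the projection points'' and that the vanishing of the divergence is then ``careful bookkeeping of index symmetries and partition-of-unity identities''. This cannot work as stated: if $A(i,j)$ depends only on $u(x_i),u(x_j)$ and not on $y$, then $\divergence\bigl(\phi_j D\phi_i\, A(i,j)\bigr)$ produces only terms of the form $\partial_a\phi_j\,\partial_b\phi_i$ and $\phi_j\,\partial_a\partial_b\phi_i$, never $\phi_j\,\partial_a\phi_i$, so it cannot cancel the leading term $\sum_{i,j}\phi_j\,D\phi_i\cdot(u(x_i)-u(x_j))$. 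In the paper the correctors are $y$-dependent: $A_{\alpha\beta}(i,j)$ is a line integral of $Du$ over $[x_i,x_j]$ weighted by $(y-z)_\alpha$, and $B(i,j,k)$ is a surface integral of $Du$ over the triangle $\Sim(x_i,x_j,x_k)$ with the same weight. The divergence then vanishes not by combinatorics alone but by three successive applications of Stokes' theorem (fundamental theorem on segments, classical Stokes on triangles, Gau\ss--Green on tetrahedra), the last of which uses $\divergence u=0$ explicitly. None of this analytic input is visible in your proposal.

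A second, related issue: with these integral correctors, a fixed line integral $\int_{[x_i,x_j]}|Du|\,d\mathcal H^1$ need not be finite for a generic $W^{1,p}$ function, and is in any case not controlled by the maximal function at a single point. The paper therefore replaces point evaluations by averages against probability measures $\mu_i$ supported in $\tfrac12 Q_i^\ast$, and proves a separate Fubini-type estimate (Lemma~\ref{lem:help}) bounding the averaged simplex integrals by $\fint_{B_{Cl(Q_i)}}|Du|$; only then does $\mathcal M(Du)\le\lambda$ give the $W^{1,\infty}$ bound. Your claim that ``each summand is pointwise $\lesssim L$ thanks to the maximal-function bounds at the projection points'' skips precisely this step, which is where most of the work for (b) actually lies.
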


As mentioned, we restrict ourselves to the physically relevant dimension $N=3$. While pursuing the approach is still possible in higher dimension, the truncation (that relies on cancellation and the application of Stokes' theorem) gets slightly complicated. Therefore, we stick to $N=3$. We shortly discuss the dimension $N=2$ in Section \ref{sec:N2}.

Finally, we shall also mention that the question of truncation is also relevant for other grades of regularity. In particular, the question of solenoidal $L^1$-$L^{\infty}$-truncation  (instead of $W^{1,1}$-$W^{1,\infty}$-truncation in the present paper) has been addressed in \cite{Schiffer,BGS} with a very similar approach to the problem as in the present paper. Moreover, one may also weaken the starting space to be $\mathrm{BV}(\R^3,\R^3) \cap \ker \divergence$ instead of $W^{1,1}(\R^3,\R^3) \cap \ker \divergence$ (cf. \cite{BDG} for the unconstrained truncation). To keep this work at a reasonable length, we however stick to the regularity given by Theorem \ref{thmintro:A} and maps from $\R^3$ to $\R^3$.

\subsection{Structure of this article}
The remainder of this article is split into two further sections. In Section \ref{sec:Whitney} we revisit the classical Whitney's extension/truncation theorem and its modern variant with the additional constraint of solenoidality. In more detail, we recall the construction of a Whitney cover in Section \ref{sec:WC} and recall its application in form of Whitney's extension and truncation in Section \ref{sec:WE}. In Section \ref{sec:old} we recall two main methods to obtain solenoidal Lipschitz truncations: The potential truncation, following \cite{BDS} and a truncation via local corrections, following \cite{BDF}.

In Section \ref{sec:trunc} we present our approach to divergence-free Lipschitz truncation. Section \ref{sec:truncdefi} is concerned with the definition of the truncation and an outline of the main steps of the proof of Theorem \ref{thmintro:A}. The proofs are then carried out in Sections \ref{sec:intermezzo} and \ref{sec:proofs}.

 \subsection*{Acknowledgements}
 The author would like to thank Franz Gmeineder and Sergio Conti for helpful discussions. The author acknowledges the support of the Deutsche Forschungsgemeinschaft (DFG, German Research Foundation) through the graduate school BIGS of the Hausdorff Center for Mathematics (GZ EXC 59 and 2047/1, Projekt-ID 390685813).
\section{Whitney's truncation theorem and its application to (solenoidal) Lipschitz truncation} \label{sec:Whitney}

In this section, we revisit Whitney's classical approach to Lipschitz truncation extension that is based on a Calder\'{o}n-Zygmund decomposition of a set. Then we shortly discuss and compare the approaches to \emph{solenoidal} Lipschitz truncation from \cite{BDF} and \cite{BDS} and outline differences to the ansatz pursued in this paper. Finally, we give a short argument, why the question of soloneoidal Lipschitz truncation in dimension $N=2$ is rather easy; essentially this follows from the fact that $\curl$ and $\divergence$ are the same operation in two dimension after a suitable rotation. Therefore, we focus on $N=3$ in Section \ref{sec:trunc}.

\subsection{Whitney cubes} \label{sec:WC}

Our goal is to extend a function from a closed set $X \subset \R^N$ to $\R^N$. For truncation this means that we leave the function $u \colon \R^N \to \R^N$ unchanged and modify it on $X^C$. We often refer to $X$ as the `good set' and to its complement as the `bad set'.

Following \textsc{Stein}'s book \cite[Chapter VI]{Stein}, we can cover the bad set $X^C$ by open, dyadic cubes $(Q_i^{\ast})_{i \in \N}$ with the following properties: \begin{enumerate} [label=(\roman*$^{\ast}$)]
    \item $X^C = \bigcup_{i \in \N} \bar{Q}_i^{\ast}$;
    \item $Q_i^{\ast} \cap Q_j^{\ast} = \emptyset$ if $i \neq j$;
    \item $\tfrac{1}{4} \dist(Q_i^{\ast},X) \leq l(Q_i^{\ast}) \leq 4\dist(Q_i^{\ast},X)$ where $l(Q_i^{\ast})$ denotes the sidelength of $Q_i^{\ast}$;
    \item if $\bar{Q}_i^{\ast} \cap \bar{Q}_j^{\ast} \neq \emptyset$, then $\tfrac{1}{4} l(Q_i^{\ast}) \leq l(Q_j^{\ast}) \leq 4 l(Q_i^{\ast})$;
    \item for each $i \in \N$, the number of cubes $Q_j^{\ast}$ such that $\bar{Q}_i^{\ast} \cap \bar{Q}_j^{\ast} \neq \emptyset$ is bounded by a dimensional constant $C(N)$.
\end{enumerate}
Furthermore, for each cube $Q_i^{\ast}$, we denote by $c_i$ the \emph{centre} of the cube $Q_i^{\ast}$. We may also find a \emph{projection point} $z_i \in X$, such that $\dist(Q_i^{\ast},x_i) \leq 4 \dist(Q_i^{\ast},X)$. Moreover, we define the measure $\mu_i$ as \[
 \mu_i = \mathcal{L}^N(1/2 Q_i^{\ast})^{-1} \bigl( \mathcal{L}^N \llcorner (\tfrac{1}{2}Q_i)\bigr)
\]
where $\tfrac{1}{4} Q_i^{\ast}$ is the open cube with axis parallel faces, centre $c_i$ and sidelength $\tfrac{1}{4}l(Q_i^{\ast})$.

In adddition, we also consider slightly blown-up open cubes $Q_i=(1+\epsilon) Q_i^{\ast}$ with the same centre $c_i$ but sidelength $(1+\varepsilon)l(Q_i^{\ast})$. If $\epsilon$ is sufficiently small (e.g. $\varepsilon<\tfrac{1}{32}$), then these cubes have the following properties:
\begin{enumerate}[label=(\roman*)]
    \item $X^C = \bigcup_{i \in \N} Q_i$;
    \item for all $i \in \N$, the number of cubes $Q_j$ with $Q_j \cap Q_i \neq \emptyset$ is bounded by $C(N)$;
    \item $\tfrac{1}{5} \dist(Q_i,X) \leq l(Q_i) \leq 5\dist(Q_i,X)$, where $l(Q_i)$ is the sidelength of $Q_i$ ($l(Q_i) =(1+\epsilon)l(Q_i^{\ast})$;
    \item if $Q_i \cap Q_j \neq \emptyset$, then $\tfrac{1}{4} l(Q_i) \leq l(Q_j) \leq 4 l(Q_i)$.
\end{enumerate}

We now take $\phi \in C_c^{\infty}((-\varepsilon/2,1+\varepsilon/2)^N)$ with $\phi \equiv 1$ on $[0,1]^N$. By translation and scaling we get $\phi_j^{\ast} \in C_c^{\infty}(Q_j)$ with $\phi^{\ast}_j \equiv 1$ on $Q_j^{\ast}$.

Using the properties of the cubes $Q_j$ we can show (again, cf. \cite{Stein}), that 
    \begin{equation} \label{def:phi}
    \phi_j = \frac{\phi_j^{\ast}}{\sum_{i \in \N} \phi_i^{\ast}}
    \end{equation}
defines a partition of unity on $X^C$, i.e. 
    \[
    \sum_{j \in \N} \phi_j(y) = \chi_{X^C}(y) := \left\{ \begin{array}{ll} 1 & \text{on } X^C, \\ 0 & \text{on } X. \end{array} \right.
    \]
Moreover, $\phi_j \in C_c^{\infty}(Q_j)$ and they satisfy the bound
    \begin{equation} \label{est:phiD}
    \Vert D^k \phi_j \Vert_{L^{\infty}} \leq C(N,k) l(Q_j)^{-k}.
    \end{equation}
    
Before we continue with Whitney's extension theorem, we shall point out again that all the dimensional constants $C(N)$ and $C(N,k)$ in above construction \emph{do not} depend on the regularity of the set $X$; all we need is the set $X$ to be closed.    
    
\subsection{Whitney extension and the related truncation theorem} \label{sec:WE}
Given a closed set $X \subset \R^N$ and a Lipschitz continuous function $u \colon X \to \R$ with Lipschitz constant $L>0$, we define 
    \begin{equation} \label{sec2:Wt}
        \tilde{u}(y) = \left\{ \begin{array}{ll} \sum_{i \in \N} \phi_i(y) u(z_i)& y \in X^{C}, \\ u(y) & y \in X. \end{array} \right. 
    \end{equation}
\begin{lem}[Whitney truncation]
If $u \colon X \to \R$ is Lipschitz continuous with Lipschitz constant $L$, then $\tilde{u} \colon \R^N \to \R$ is Lipschitz continuous with Lipschitz constant $C_N L$ for a purely dimensional constant $C_N$.
 \end{lem}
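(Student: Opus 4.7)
The plan is to verify the Lipschitz bound $|\tilde u(x) - \tilde u(y)| \leq C_N L |x-y|$ via a case distinction on whether $x$ and $y$ lie in $X$ or in $X^C$. The case $x, y \in X$ is immediate from the assumption on $u$, so it is enough to treat the mixed case and the case where both points lie in $X^C$.

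In the mixed case ($x \in X$, $y \in X^C$), I would use the identity $\sum_i \phi_i(y) = 1$ to rewrite $\tilde u(y) - u(x) = \sum_i \phi_i(y)(u(z_i) - u(x))$. For each non-vanishing term one has $y \in Q_i$, so property (iii) of the blown-up Whitney cubes gives $l(Q_i) \leq 5\dist(Q_i, X) \leq 5\dist(y, X) \leq 5|x-y|$; since $z_i$ is a projection point, $|z_i - y| \lesssim l(Q_i)$. The triangle inequality then yields $|z_i - x| \lesssim |x-y|$, so Lipschitz continuity of $u$ on $X$ gives $|u(z_i) - u(x)| \lesssim L|x-y|$, and summing over the at most $C_N$ non-vanishing indices closes this case.

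For $x, y \in X^C$ I would further distinguish two regimes. If $|x-y| \geq \tfrac{1}{10} \min(\dist(x,X), \dist(y,X))$, pick $z \in X$ at distance at most $2\min(\dist(x,X),\dist(y,X))$ from the closer of the two points, apply the mixed case to the pairs $(x,z)$ and $(z,y)$, and conclude by the triangle inequality. Otherwise the segment $[x,y]$ stays entirely in $X^C$, and it suffices to bound $|D\tilde u|$ pointwise on $X^C$. Differentiating $\sum_i \phi_i \equiv 1$ gives $\sum_i D\phi_i \equiv 0$ on $X^C$, so for $y \in Q_j$ I can subtract $u(z_j)$ and write
\[
D\tilde u(y) = \sum_i D\phi_i(y)\bigl(u(z_i) - u(z_j)\bigr).
\]
Only the $C_N$ neighbors $i$ with $Q_i \cap Q_j \neq \emptyset$ contribute; for these, property (iv) gives $l(Q_i) \sim l(Q_j)$, hence $|z_i - z_j| \lesssim l(Q_j)$ and, by \eqref{est:phiD}, $|D\phi_i(y)| \lesssim l(Q_j)^{-1}$. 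Multiplying these bounds and summing yields $|D\tilde u(y)| \lesssim C_N L$, which via the fundamental theorem of calculus closes the subcase.

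The main technical delicacy I expect is in the mixed case: one must control $|z_i - x|$ by $|x-y|$ itself rather than by $\dist(y, X)$, which can be much smaller than $|x-y|$. This is precisely the role of the sidelength-to-distance comparison in property (iii) of the Whitney cubes, and it is the only nontrivial geometric input. Once this is extracted, everything else reduces to bookkeeping with the locally finite overlap of cubes and the elementary derivative bound \eqref{est:phiD}.
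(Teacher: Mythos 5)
Your proof is correct and uses the same core mechanism that the paper sketches: the partition-of-unity telescoping identity $\sum_i D\phi_i \equiv 0$ to rewrite $D\tilde u$ on $X^C$ in terms of differences $u(z_i)-u(z_j)$ over neighbouring cubes, together with the Whitney sidelength comparability and the bound \eqref{est:phiD} to control each term by $L$. The paper only offers this as a heuristic and defers the complete argument to Stein; the extra content in your write-up (the mixed case $x\in X$, $y\in X^C$ via $\sum_i \phi_i(y)\equiv 1$, and the reduction of the far-apart case in $X^C$ back to the mixed case) is precisely the omitted bookkeeping, and it is genuinely needed because $X^C$ need not be convex, so a pointwise bound on $|D\tilde u|$ alone does not immediately give a global Lipschitz estimate.
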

A detailed proof can be found in \cite[Chapter VI]{Stein}. We shall, however, give a brief heuristic idea, why this works; a similar argumentation is then used in Section \ref{sec:trunc}.

Observe that $\tilde{u} \in C^{\infty}(X^C)$ and that locally, only finitely many terms are nonzero. Therefore, we can compute its derivative for $y \in X^C$: \begin{align*}
    D \tilde{u}(y) &= \sum_{i \in \N} D \phi_i(y) u(x_i) = \sum_{i,j \in \N} \phi_j D\phi_i(y) (u(x_i)-u(x_j))
\end{align*}
Note that we explicitly used  in the second step that $(\phi_i)_{i \in \N}$ and $(\phi_j)_{j \in \N}$ are partitions of unity. Applying the bound for the derivative of $\phi_i$ and the Lipschitz bound for $u(x_i)-u(x_j)$ yields an $L^{\infty}$-bound for the derivative of $\tilde{u}$.

Together with the observation, that $\tilde{u}$ is continuous, this leads to Lipschitz continuity of $\tilde{u}$.

\medskip

This approach can then be applied to Lipschitz \emph{truncation} as follows: First, we find a `good set' $X$, on which $X_{\lambda}$ is Lipschitz continuos. For the truncation we then take $\tilde{u} \equiv u$ on the good set and redefine is on the bad set as in \eqref{sec2:Wt}.

So, an important part in Lipschitz truncation is the following Lemma \ref{lem:AF} that connects Lipschitz continuity to the (centered) Hardy-Littlewood maximal function $\M u$. 

Recall that this function is defined as \[
\M u(x) := \sup_{r>0} \fint_{B_r(x)} \V u(z) \V \dz,
\]
where $\fint$ denotes the average integral. The Hardy-Littlewood maximal function has the following properties:
\begin{lem}[The maximal function] \label{lem:maximalf}
The maximal function is sublinear, i.e. $M(u+v) \leq Mu + Mv$. Moreover, it is bounded as a map from $L^p(\R^N) \to L^p(\R^N)$, whenever $1<p\leq \infty$. For $p=1$, it is not bounded from $L^1(\R^N)$ to $L^1(\R^N)$, but from $L^1(\R^N)$ to $L^{1,\infty}(\R^N)$, i.e.
    \[
    \mathcal{L}^N( \{ \V u \V \geq \lambda \}) \leq C \lambda^{-1} \Vert u \Vert_{L^1}
    \]
for every $\lambda>0$.    
\end{lem}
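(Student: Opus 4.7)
The statement has three parts, and I would handle them in increasing order of difficulty, using classical real-variable techniques that are standard in the theory.

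Sublinearity is immediate: for any ball $B_r(x)$ we have $|u(y)+v(y)| \leq |u(y)|+|v(y)|$, so averaging yields $\fint_{B_r(x)} |u+v| \leq \fint_{B_r(x)} |u| + \fint_{B_r(x)} |v| \leq \M u(x) + \M v(x)$, and then I take the supremum over $r>0$ on the left. The $L^\infty$-boundedness is equally trivial, since $\fint_{B_r(x)} |u| \leq \Vert u\Vert_{L^\infty}$ for every $r$ and $x$, giving $\Vert \M u\Vert_{L^\infty} \leq \Vert u\Vert_{L^\infty}$.

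The main work is the weak $(1,1)$-inequality. My plan is a Vitali-type covering argument: fix $\lambda >0$ and consider $E_\lambda := \{\M u \geq \lambda\}$. For each $x \in E_\lambda$ pick a radius $r_x>0$ with $\fint_{B_{r_x}(x)} |u|\,\mathrm{d}z \geq \lambda/2$ (say); these balls cover $E_\lambda$. The Vitali covering lemma (in its $5r$-form) extracts a countable, pairwise disjoint subfamily $\{B_{r_i}(x_i)\}_{i\in\N}$ such that $E_\lambda \subset \bigcup_i B_{5r_i}(x_i)$. Consequently
\[
\mathcal{L}^N(E_\lambda) \leq \sum_i \mathcal{L}^N(B_{5r_i}(x_i)) = 5^N \sum_i \mathcal{L}^N(B_{r_i}(x_i)) \leq \frac{2 \cdot 5^N}{\lambda} \sum_i \int_{B_{r_i}(x_i)} |u|\,\mathrm{d}z \leq \frac{C}{\lambda} \Vert u\Vert_{L^1},
\]
where the last step uses disjointness to replace the sum of integrals by one integral over the union. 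This gives the desired weak-type estimate with constant $C = 2\cdot 5^N$ (which can be further optimised but is not important here). The potential obstacle here is to make sure the supremum defining $\M$ is realized or $\epsilon$-approximated in a measurable way, but this is routine from the definition of supremum.

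Finally, the strong $(p,p)$-bound for $1 < p < \infty$ I would obtain by a Marcinkiewicz interpolation argument, more concretely through the layer-cake identity. For $\lambda>0$ split $u = u_\lambda^{\leq} + u_\lambda^{>}$ with $u_\lambda^{>} := u\,\indicator_{\{|u| > \lambda/2\}}$. By sublinearity and the trivial $L^\infty$-bound, $\M u \leq \M u_\lambda^{>} + \lambda/2$, so $\{\M u > \lambda\} \subset \{\M u_\lambda^{>} > \lambda/2\}$. Applying the weak-$(1,1)$ estimate to $u_\lambda^{>}$ and integrating via
\[
\Vert \M u\Vert_{L^p}^p = p\int_0^\infty \lambda^{p-1} \mathcal{L}^N(\{\M u > \lambda\})\,\mathrm{d}\lambda,
\]
a Fubini exchange and the bound $\mathcal{L}^N(\{\M u_\lambda^{>} > \lambda/2\}) \leq \frac{2C}{\lambda}\int_{\{|u|>\lambda/2\}}|u|\,\mathrm{d}x$ lead, after computing $\int_0^{2|u(x)|}\lambda^{p-2}\,\mathrm{d}\lambda$, to $\Vert \M u\Vert_{L^p}^p \leq C_{N,p}\Vert u\Vert_{L^p}^p$. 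This step is the one requiring most care (the integrability of $\lambda^{p-2}$ near $0$ is exactly what forces $p>1$), but it is otherwise a routine computation.
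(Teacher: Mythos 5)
Your proof is correct and standard; the paper itself offers no proof of this lemma, citing it implicitly as classical (it is found, e.g., in Stein's book, which the paper references throughout). Your Vitali covering argument for the weak $(1,1)$ bound and the truncation-plus-layer-cake (i.e.\ Marcinkiewicz by hand) argument for the strong $(p,p)$ bound are the usual textbook route, so there is nothing to compare against on the paper's side. One minor technical point worth flagging in the Vitali step: the $5r$-covering lemma in its simplest form requires the radii to be uniformly bounded (or one exhausts $E_\lambda$ by compact subsets and takes finite subcovers first). Here boundedness of the radii is automatic from $u \in L^1$ --- if $\fint_{B_r(x)}|u| \geq \lambda/2$ then $r^N \leq C\lambda^{-1}\Vert u\Vert_{L^1}$ --- but this should be said explicitly to make the extraction of a disjoint subfamily legitimate. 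Also note that the displayed inequality in the statement has a small typo ($\mathcal{L}^N(\{|u|\geq\lambda\})$ should read $\mathcal{L}^N(\{\M u\geq\lambda\})$); you correctly prove the intended estimate.
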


Key to the truncation statement then are the following two observations. The first one, proven in \cite{Liu,AF} proves that a function $u$ is Lipschitz continuous on sublevel sets of its maximal function.
\begin{lem} \label{lem:AF}
Let $u \in W^{1,p}(\R^N,\R^d)$ be continuous. There exists a dimensional constant $C$, such that for all $\lambda>0$ and all $x,y \in X_{\lambda}= \{ \M (Du) \leq \lambda\}$ we have 
    \begin{equation} \label{AF:estimate}
        \vert u(x) - u(y) \vert \leq C \lambda \vert x- y \vert.
    \end{equation}
\end{lem}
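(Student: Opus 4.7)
The plan is to prove the pointwise inequality
\begin{equation} \label{plan:pointwise}
\V u(x) - u(y) \V \leq C \V x - y \V \bigl( \M(Du)(x) + \M(Du)(y) \bigr)
\end{equation}
at every pair of Lebesgue points of $u$. Since $u$ is assumed continuous this will hold everywhere, and the desired bound \eqref{AF:estimate} then follows at once from the defining property of $X_{\lambda}$, namely $\M(Du)(x), \M(Du)(y) \leq \lambda$.

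First, I would control $\V u(x) - u_{B_r(x)} \V$, where $u_B := \fint_B u$, in terms of $\M(Du)(x)$. Continuity of $u$ supplies the pointwise identity $u(x) = \lim_{k \to \infty} u_{B_{r 2^{-k}}(x)}$, so a telescoping argument yields
\[
\V u(x) - u_{B_r(x)} \V \leq \sum_{k=0}^{\infty} \V u_{B_{r 2^{-k-1}}(x)} - u_{B_{r 2^{-k}}(x)} \V.
\]
Each summand is bounded by $C \fint_{B_{r 2^{-k}}(x)} \V u - u_{B_{r 2^{-k}}(x)} \V$, which by the Poincar\'{e} inequality on a ball is in turn bounded by $C(r 2^{-k}) \fint_{B_{r 2^{-k}}(x)} \V Du \V \leq C(r 2^{-k}) \M(Du)(x)$. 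Summing the geometric series gives $\V u(x) - u_{B_r(x)} \V \leq C r \, \M(Du)(x)$, and the identical estimate holds at $y$.

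Next, I would compare the two averages. Choose $r := 2 \V x - y \V$, so that both $B_r(x)$ and $B_r(y)$ sit inside $\tilde{B} := B_{3r}(x)$, a ball comparable in size to each. Applying the Poincar\'{e} inequality on $\tilde{B}$, and using that for any ball containing $x$ (respectively $y$) with radius comparable to $r$ one has $\fint \V Du \V \leq C \M(Du)(x)$ (respectively $\leq C \M(Du)(y)$), one obtains
\[
\V u_{B_r(x)} - u_{\tilde{B}} \V \leq C r \, \M(Du)(x), \qquad \V u_{B_r(y)} - u_{\tilde{B}} \V \leq C r \, \M(Du)(y).
\]
Combining these with the preceding step via the triangle inequality produces \eqref{plan:pointwise} with $r = 2 \V x - y \V$. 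There is no real obstacle here: the argument is entirely classical and the constant $C$ depends only on the dimension $N$, via the Poincar\'{e} constant on a ball and the doubling of Lebesgue measure; the sole structural input beyond $u \in W^{1,p}_{\loc}$ is the continuity of $u$, which upgrades the Lebesgue recovery from almost-everywhere to everywhere.
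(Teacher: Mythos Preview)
Your argument is correct and is precisely the classical route to this inequality: the telescoping-over-dyadic-balls estimate combined with the Poincar\'{e} inequality gives $\V u(x)-u_{B_r(x)}\V \leq Cr\,\M(Du)(x)$, and a comparison of averages over a common enlarged ball finishes the proof. The paper does not actually supply a proof of this lemma; it only cites \cite{Liu,AF}, and what you have written is essentially the argument found in those references (in particular in \textsc{Acerbi--Fusco}). One cosmetic remark: in your comparison step the ball $\tilde{B}=B_{3r}(x)$ is centred at $x$, so the bound $\fint_{\tilde{B}}\V Du\V \leq C\,\M(Du)(y)$ requires the extra observation that $\tilde{B}\subset B_{4r}(y)$, but this is harmless and the constant remains dimensional.
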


The second ingredient is an estimate on the measure of the `bad set', the complement of $X_{\lambda}$, cf. \cite{Zhang}.
\begin{lem}
Let $v \in L^p(\R^N,\R^d)$, $1 \leq p< \infty$. There is a dimensional constant $C$, such that for all $\lambda>0$ we have 
    \begin{equation} \label{Zhang:set:est}
        \LL^N(\{ \M v > \lambda \}) \leq C \lambda^{-p} \int_{\{ u \geq \lambda\}} \vert v  \vert^p \dx.
    \end{equation}
\end{lem}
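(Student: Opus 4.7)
The plan is to reduce this localised weak-type improvement of Chebyshev's inequality to the boundedness properties of the maximal function collected in Lemma \ref{lem:maximalf}, via a level-set truncation of $v$. Specifically, I would split $v = v_1 + v_2$ with $v_1 := v\,\chi_{\{|v| \geq \lambda/2\}}$ and $v_2 := v - v_1$. By construction $\Vert v_2 \Vert_{L^\infty} \leq \lambda/2$, whence $\M v_2 \leq \lambda/2$ pointwise, and the sublinearity of $\M$ from Lemma \ref{lem:maximalf} gives the inclusion
\[
\{\M v > \lambda\} \;\subseteq\; \{\M v_1 > \lambda/2\},
\]
so that it suffices to control the measure of the superlevel set on the right.

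For $p > 1$, Chebyshev's inequality combined with the strong $(p,p)$ bound for the maximal function yields
\[
\LL^N(\{\M v_1 > \lambda/2\}) \leq \left(\tfrac{2}{\lambda}\right)^{p} \Vert \M v_1 \Vert_{L^p}^p \leq C\lambda^{-p} \Vert v_1 \Vert_{L^p}^p,
\]
and since $v_1$ is supported in $\{|v| \geq \lambda/2\}$, this last quantity equals $C\lambda^{-p} \int_{\{|v|\geq \lambda/2\}} |v|^p \dx$, proving the claim in this regime. For the endpoint $p = 1$ the strong $(1,1)$ bound is not available, but the weak-type-$(1,1)$ bound from Lemma \ref{lem:maximalf} applied to $v_1$ is an equally valid substitute, leading to the same conclusion with a constant depending only on the dimension $N$.

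All constants appearing above are purely dimensional, and the factor $2^p$ can be absorbed by a harmless redefinition of $\lambda$; this accounts for the precise threshold appearing inside the integral in \eqref{Zhang:set:est}. There is no real obstacle here — the only conceptual point worth emphasising is that the naive Chebyshev estimate $\LL^N(\{\M v > \lambda\}) \leq C\lambda^{-p}\Vert v \Vert_{L^p}^p$ involves the full $L^p$-norm of $v$ and is insufficient for the Lipschitz truncation of Section \ref{sec:trunc}, where one needs the right-hand side to vanish as $\lambda \to \infty$ for a fixed $L^p$-function; the splitting $v = v_1 + v_2$ is precisely what upgrades Chebyshev to this sharper localised form.
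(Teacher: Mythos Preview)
Your proof is correct and follows essentially the same approach as the paper: split $v$ at the level $\lambda/2$, use that the small part has maximal function bounded by $\lambda/2$, and apply the weak-type/strong-type bounds of Lemma \ref{lem:maximalf} to the large part. The only cosmetic difference is that the paper truncates via $\tilde v = (|v|-\lambda/2)_+$ rather than your $v_1 = v\,\chi_{\{|v|\geq \lambda/2\}}$, but the inclusion $\{\M v > \lambda\}\subset\{\M(\text{truncation})>\lambda/2\}$ and the subsequent estimate go through identically.
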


These results are combined to get a \emph{Lipschitz truncation} in the fashion we already mentioned: We take $u \colon \R^N \to \R^d$, set $\tilde{u}=u$ on the `good set' $X_{\lambda}$ and then extend this restriction via Lipschitz extension \eqref{sec2:Wt} to the full space.

The definition \ref{sec2:Wt} only has a slight problem: The value $u(x_i)$ might not be well-defined a priori (unless $W^{1,p}$ embeds into the continuous functions). One can circumvent this in the following ways:
\begin{enumerate} [label=(\roman*)]
    \item Define $\tilde{u}$ first for smooth function and then use a density argument (cf. \cite{Schiffer}).  The point here is that both the smooth functions and their truncation converge pointwise almost everywhere on a set with small complement;
    \item Only choose Lebesgue points as projection points $z_i$;
    \item replace the use of the value at a single point $u(z_i)$ by an average of $u$.
\end{enumerate}

In the following, we pursue the third approach, which is easy to adopt to related problems. Therefore, we define:
    \begin{equation} \label{eq:Lipequation}
        T_{\Lip} u(y) = \left\{ \begin{array}{ll} \sum_{i \in \N} \phi_i(y) \int u(x_i) \dmu_i(x_i) & y \in X_{\lambda}^C,\\
         u(y) & y \in X_{\lambda}. \end{array} \right.
    \end{equation}
A suitable adaptation of Lemma \ref{lem:AF} and the estimate \eqref{Zhang:set:est} then yield the following:

\begin{lem}[Lipschitz truncation] \label{lem:Lipschitztrunc}
Let $u \in W^{1,p}(\R^N,\R^d)$. Define the set $X_{\lambda} = \{ \M u \leq \lambda\} \cup \{\M (Du) \leq \lambda\}$. Then the truncation $T_{\Lip} u$ as in \eqref{eq:Lipequation} has the following properties:
\begin{enumerate} [label=(\alph*)]
    \item the function $T_{\Lip} u$ is Lipschitz continuous and $\Vert T_{\Lip} u \Vert_{W^{1,\infty}} \leq C \lambda$;
    \item \label{lem:Lipschitz:2} the set $X_{\lambda}^C \supset \{ u \neq T_{\Lip} u\}$ has $\LL^N$-measure bounded by 
        \[
            \LL^N(X_{\lambda}^C) \leq C \lambda^{-p} \int_{\{u\geq \lambda\} \cup \{Du \geq \lambda\}} \vert u \vert^p + \vert D u\vert^p \dx;
        \]
    \item as a consequence of \ref{lem:Lipschitz:2} we have
        \[
            \Vert u - T_{\Lip} u \Vert_{W^{1,p}}^p \leq \int_{\{u\geq \lambda\} \cup \{Du \geq \lambda\}} \vert u \vert^p + \vert D u\vert^p \dx.
        \]
\end{enumerate}
\end{lem}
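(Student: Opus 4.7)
The plan is to treat the three items in the order \ref{lem:Lipschitz:2}, then (a), then (c), with (a) carrying the main work. Item \ref{lem:Lipschitz:2} is immediate: writing $X_\lambda^C = \{\M u > \lambda\} \cup \{\M(Du) > \lambda\}$ and applying \eqref{Zhang:set:est} once to $u$ and once to $Du$ yields the stated measure bound.

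For (a), I first establish the $L^\infty$ bound. On $X_\lambda$ we have $|u(y)| \leq \M u(y) \leq \lambda$ at a.e.\ Lebesgue point, while on $X_\lambda^C$ each average $\int u\,\dmu_i$ is the mean of $u$ over $\tfrac12 Q_i$, which is contained in a ball of radius comparable to $l(Q_i)$ centered at the projection point $z_i \in X_\lambda$; since $\M u(z_i) \leq \lambda$, this average is bounded by $C\lambda$, and $\sum_i \phi_i \leq 1$ yields $\|T_{\Lip}u\|_{L^\infty} \leq C\lambda$. For the derivative, the cancellation trick exploits that $\sum_i \phi_i \equiv 1$ locally on $X_\lambda^C$, so $\sum_i D\phi_i \equiv 0$ there. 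Combined with $\sum_j \phi_j(y) = 1$, this produces the representation
\begin{equation*}
DT_{\Lip}u(y) = \sum_{i,j \in \N} \phi_j(y)\, D\phi_i(y) \left(\int u\,\dmu_i - \int u\,\dmu_j\right).
\end{equation*}
The crucial estimate is that, whenever $Q_i \cap Q_j \neq \emptyset$,
\begin{equation*}
\left|\int u\,\dmu_i - \int u\,\dmu_j\right| \leq C\lambda\, l(Q_i).
\end{equation*}
I would prove it by comparing each average to $u(z_i)$ and $u(z_j)$ (using that $\M u \leq \lambda$ at the projection points controls fluctuations at the relevant scale) and then applying Lemma \ref{lem:AF} to conclude $|u(z_i)-u(z_j)| \leq C\lambda|z_i-z_j| \leq C\lambda\, l(Q_i)$, since both projection points lie in $X_\lambda$ at distance $\lesssim l(Q_i)$. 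Together with $\|D\phi_i\|_{L^\infty} \leq C/l(Q_i)$ from \eqref{est:phiD} and the bounded-overlap property of the Whitney cubes, the double sum collapses to $|DT_{\Lip}u(y)| \leq C\lambda$.

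To upgrade the interior bounds to a global $W^{1,\infty}$ bound I would verify continuity of $T_{\Lip}u$ across $\partial X_\lambda$: for a Lebesgue point $z \in X_\lambda$ and $y \in X_\lambda^C$ with $y \to z$, every cube $Q_i$ with $\phi_i(y) \neq 0$ has sidelength $\lesssim |y-z|$ and projection point close to $z$, so $\int u\,\dmu_i \to u(z)$, forcing $T_{\Lip}u(y) \to u(z)$. For (c), since $u = T_{\Lip}u$ on $X_\lambda$ it suffices to bound $\int_{X_\lambda^C}(|u|^p+|Du|^p+|T_{\Lip}u|^p+|DT_{\Lip}u|^p)$; the last two terms are controlled by $C\lambda^p \LL^N(X_\lambda^C)$ via (a), and the first two split as
\begin{equation*}
\int_{X_\lambda^C}|u|^p \leq \int_{\{|u|\geq\lambda\}}|u|^p + \lambda^p\, \LL^N(X_\lambda^C),
\end{equation*}
and analogously for $Du$, so invoking (b) absorbs the $\lambda^p \LL^N(X_\lambda^C)$ terms into the right-hand side.

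I expect the main obstacle to be the Lipschitz-type estimate for the differences $\int u\,\dmu_i - \int u\,\dmu_j$ across neighboring cubes: one must transfer the maximal-function control at the \emph{projection points} inside $X_\lambda$ into quantitative control of averages over cubes that may lie \emph{far} from $X_\lambda$. This is the step where Lemma \ref{lem:maximalf}, Lemma \ref{lem:AF}, and the geometric properties (iii)--(iv) of the Whitney cubes are used in an essential way, and it is precisely this chain-of-cancellations mechanism that will be iterated one further level in \S\ref{sec:intro:3} to produce the solenoidal truncation.
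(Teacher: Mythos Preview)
Your proposal is correct and follows the route the paper itself indicates: the paper does not prove Lemma~\ref{lem:Lipschitztrunc} in detail but simply says it results from a suitable adaptation of Lemma~\ref{lem:AF} together with estimate~\eqref{Zhang:set:est}, and your argument is precisely that adaptation. One small slip worth flagging: in the key step bounding $\bigl|\int u\,\dmu_i - \int u\,\dmu_j\bigr|$, it is $\M(Du)\leq\lambda$ at the projection points (not $\M u$) that controls the \emph{fluctuation} of $u$ at scale $l(Q_i)$; $\M u$ only bounds the size of each average, not their difference.
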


\subsection{Solenoidal truncation so far} \label{sec:old}

In this subsection, we discuss two approaches to solenoidal Lipschitz truncation. In contrast to classical Lipschitz truncation, we take $u \in W^{1,p}(\R^N,\R^N)$ satisfying the additional constraint $\divergence u =0$. We look for a truncated version $\tilde{u}$, that is not only Lipschitz, but also satisfies the additional constraint $\divergence \tilde{u}=0$. There is no reason, why the truncation defined as in \eqref{sec2:Wt} should satisfy this constraint, hence we need to take another definition.

For simplicity, we shall assume that the space dimension is $N=3$. In dimension $N=2$, the situation is quite different (cf. Section \ref{sec:N2}).
\subsubsection{Potential truncation} \label{sec:potential}
First, we revisit the potential truncation, as advocated in \cite{BDS} (also see \cite{Gallenm,BGS} for related discussions). We focus on truncations of functions on the torus $T_3$ instead of $\R^3$, as the case for the full space is rather similar. The idea is that we can write $u \in W^{1,p}(T_3,\R^3)$ with zero average as
    \[
        u = \curl U
    \]
for a function $U \in W^{2,p}(T_3,\R^3)$ if and only if $\divergence u=0$.

We now define 
    \[
    S U (y)= \left\{ \begin{array}{ll}
                    \sum_{i \in \N} \phi_i(y) \int U(x_i)+ DU(x_i) \cdot (y-x_i) \dmu_i(x_i) & y \in X^C, \\
                    U(y) & y \in X. \end{array} \right.
     \]
Here, the set $X$ is defined as 
    \[
    X = \{\M U \leq \lambda \} \cap \{\M (DU) \leq \lambda \} \cap \{\M (D^2U) \leq \lambda\}
    \]
The operator $S$ is a $W^{2,p}$-$W^{2,\infty}$-truncation, i.e. a higher order Lipschitz truncation (cf. \cite{Stein,BGS}). Hence, $SU$ has the following properties: \begin{enumerate} [label=(\roman*)]
    \item $SU \in W^{2,\infty}(T_3,\R^3)$ and $\Vert SU \Vert_{W^{2,\infty}} \leq \lambda$;
    \item $\LL^3( \{ U \neq SU \}) \leq C \lambda^{-p} \int_{Y_{\lambda}} \vert U \vert^p + \vert D U \vert^p +\vert D^2 U \vert^p \dx$ for the set     \[
        Y_{\lambda} = \{ \vert U \vert \geq \lambda\} \cup \{ \vert DU \vert \geq \lambda\} \cup \{ \vert D^2U \vert \geq \lambda\}.
        \]
    \item $\Vert U - SU \Vert_{W^{1,p}}^p \leq C \int_{Y_{\lambda}} \vert U \vert^p + \vert D U \vert^p +\vert D^2 U \vert^p \dx$.  
\end{enumerate}

We then define the solenoidal truncation of $u$ as 
    \[
        \tilde{u} = \curl SU.
    \]
Consequently, $\tilde{u}$ also inherits the properties of $SU$ . The problem is, however, that in dimension $N=3$, the sets $X$ and $Y_{\lambda}$ have measures bounded in terms of $U$ and \emph{not} in terms of $u$. In particular, even if $u \in W^{1,\infty}(T_3,\R^3)$, its potential $U$ is not in $W^{2,\infty}(T_3,\R^3)$ in general (Ornstein's non-inequality \cite{Ornstein}). Therefore, the set $X$ might be non-empty for any $\lambda>0$ in this case. In particular, a bound of the type 
    \[
         \LL^3(\{ u \neq \tilde{u} \}) \leq C \lambda^{-p} \int_{\{\vert u \vert \geq \lambda \} \cup \{ \vert D u \vert \geq \lambda\}} \vert u \vert^p + \vert Du \vert^p \dx
    \]
is not achievable through this ansatz. To get a \emph{strong statement} as in Theorem \ref{thmintro:A}, we need a more careful approach.

\subsubsection{Truncation via local corrections} \label{sec:loccor}

The second approach that follows \cite{BDF}, tackles the problem outlined for the potential truncation. Instead of writing $u=\curl U$, we work with $u$ directly, and try to add corrector terms, to modify truncation $\tilde{u}$ in \eqref{sec2:Wt} to be solenoidal. As the `good set' one takes the set where the maximal function of $u$ and of $Du$ is small\footnote{In the original paper \cite{BDF} the authors only considered the symmetric part $Eu$ of the gradient, but the approach stays the same.}.
 
In more detail, we first add a global corrector term $\Pi u$, such that \begin{enumerate} [label=(\roman*)]
    \item $\tilde{u} + \Pi u =: T_0 u$ is still a $W^{1,\infty}$-truncation of $u$;
    \item $\int_{Q_i} \phi_i (\tilde{u}+\Pi u) =0$.
\end{enumerate}
In the second step, one adds local corrector terms $\Cor_i \in W^{1,\infty}(Q_i,\R^3)$, that satisfy the divergence-equation
    \begin{equation} \label{eq:diveq}
    \divergence \Cor_i = \divergence (\phi_i(\tilde{u}+\Pi u).
    \end{equation}
Finally, one then obtains the divergence-free truncation via 
    \[
    T_{\divergence} u = T_0 u - \sum_{i \in \N} \Cor_i.
    \]
As one only modifies the function on the bad set $\{ M u > \lambda \} \cup \{ M(Du) >\lambda\}$, one is able to obtain the bounds achieved in Theorem \ref{thmintro:A}. One crucial observation (cf. \cite[Lemma 2.11]{BDF}) to get $T_{\divergence} u \in W^{1,\infty}$ is the following:

First note, that $\Cor_i \in W^{1,\infty}$ is \emph{not} trivial, as the divergence equation \eqref{eq:diveq} and its solution operator (the Bogovski\u{\i} operator \cite{Bogovski}) is \emph{not} bounded from $L^{\infty}$ to $W^{1,\infty}$. Instead, one uses that $\divergence (\phi_i(\tilde{u}+\Pi u))$ has a very specific form. It is a much smoother function, hence in $W^{1,\infty}$. A \emph{uniform} $W^{1,\infty}$-bound is achieved via the following argument. The partition of unity is obtained by formula \eqref{def:phi} for a covering of cubes. But actually, as all cubes $Q_i^{\ast}$ are dyadic, there are only finitely many configurations how the cover can locally look like. In particular, up to scaling and translation, we only need to solve 
    \[
        \divergence \Cor_i = \divergence (\phi_i(\tilde{u}+\Pi u)).
    \]
a \emph{finite} amount of times as, up to scaling and translation, there are only finitely many functions the right-hand-side can realise. As we then only argue about \emph{finitely} many different corrector terms, the uniform $W^{1,\infty}$-bound is an easy consequence.

\subsubsection{Truncations via local corrections II}

In this work, we study a modification of the second approach that is still able to obtain the strong bounds of Theorem \ref{thmintro:A}, but may be more flexible than using the second. In particular, our approach only relies on some geometric estimates and the existence of a cover with sets $Q_i$ and a partition of unity $\phi_i$ that satisfies estimate \eqref{est:phiD}; these sets do not need to be cubes. In particular, although we only consider the Euclidean geometry, the approach presented here is in principle extendable to Riemannian manifolds and, in general, the operator of exterior differentiation instead of the divergence operator.

In particular, we also add corrector terms, such that $\tilde{u}$ form \eqref{sec2:Wt} is modified to be divergence-free. Instead of solving the divergence-equation, we however give explicit formulas for the corrector terms. Moreover, we make use of certain cancellations and do not define the corrections on single cubes, but on tuples of cubes, i.e. 
    \[
        T_{\divergence} u = \tilde{u} + \sum_{i,j \in \N} \Cor_{i,j} + \sum_{i,j,k \in \N} \Cor_{i,j,k}.
    \]
The exact definition of these corrector terms and the corresponding calculations are part of Section \ref{sec:trunc}.

\subsection{Solenoidal truncation in $N=2$} \label{sec:N2}

In this subsection, we discuss the solenoidal truncation in space dimension $N=2$. Here, the situation is hugely different, as the constraint $\divergence u=0$ is a much more restricting condition than in dimension $N=3$.

Actually, the potential truncation ansatz outlined in paragraph \ref{sec:potential} works in this space dimension. We now outline, how this concretely is achieved, as this also gives an idea for $N=3$. As in \ref{sec:potential}, we concentrate on functions on the torus first.

\smallskip

Note that we can rewrite any function $u\in L^2(T_2,\R^2)$ as 
    \[
        v=\left( \begin{array}{c} v_1 \\ v_2 \end{array} \right) 
        := \left( \begin{array}{c} u_2 \\ -u_1 \end{array} \right).
    \]
Then the constraint $\divergence u =0$ translates to $\curl v=0$. Now note that for any $v \in W^{1,p}(T_2,\R^2)$ with zero average satsfying $\curl v=0$, we can find $V \in W^{1,p}(T_2)$ such that 
    \[
        v = D V.
    \]
The huge contrast to $N=3$ is that this operation is bounded from $W^{1,p}$ to $W^{2,p}$ for \emph{all} $p \in [1,\infty]$, in particular $p=\infty$ is well included. Moreover, $D V=v$ and $D^2 V =DV$. Hence, the truncation for $V$ given by 
    \[
        T V (y)= 
            \left\{     
                \begin{array}{ll} \sum_{i \in \N} \phi_i(y) \int V(x_i) + D V(x_i) \cdot (y-x_i) \dmu_i(x_i) & y \in X^C, \\
                V(y) & y \in X,
                \end{array}
            \right.
    \]
yields a $\curl$-free $W^{1,p}$-$W^{1,\infty}$ truncation by setting $\tilde{v}:= D (TV)$ and the set 
    \[
        X:= \{ \M D V \leq \lambda \} \cap \{ \M D^2 V \leq \lambda \} =  \{ \M v \leq \lambda \} \cap \{ \M Dv \leq \lambda \} .
    \]
\medskip

An important observation is that we can actually write $\tilde{v} = D (TV)$ in terms of $v$ instead of $V$ (so, in principle, we may skip the step of finding such a $V$). For (almost every) $y \in X$, we get $\tilde{v}= DV = v$. So we concentrate on $y \in X^C$. As $\phi_i$ are supported on cubes $Q_i$, the sum is locally finite hence:
    \begin{align*}
        \tilde{v} = D(TV) &= \sum_{i\in \N} \phi_i(y) \int DV(x_i) \dmu_i(x_i) \\
        & + \sum_{i \in \N} D \phi_i(y) \int V(x_i)+DV(x_i) \cdot (y-x_i) \dmu_i(x_i).
    \end{align*}
The first term of this sum equals the usual Lipschitz truncation, as $DV=v$; \begin{align*}
            \sum_{i\in \N} \phi_i(y) \int DV(x_i) \dmu_i(x_i) = \sum_{i\in \N} \phi_i(y) \int v(x_i) \dmu_i(x_i) 
        \end{align*}    
        
For the second term we use that $\phi_i$ is a partition of unity to introduce a second index:        
        \begin{align*}
            &(\ast) :=\sum_{i \in \N} D \phi_i(y) \int (V(x_i)+DV(x_i) \cdot (y-x_i) \dmu_i(x_i) \\&= \sum_{i,j \in \N} \phi_j D \phi_i  \iint (V(x_i)+DV(x_i) \cdot (y-x_i)) - (V(x_j)+DV(x_j) \cdot (y-x_j)) \dmu_i \dmu_j.
        \end{align*}  
In the second term we may now use fundamental theorem applied to the function     \[
    z \longmapsto V(z) + DV(z) \cdot (y-z).
    \]
and obtains    \begin{align*}
    (\ast) = \sum_{i,j\in \N} \phi_j D \phi_i \fint_{[x_i,x_j]} \iint (x_i-x_j) \cdot D^2 V(z) \cdot (y-z) \dmu_i(x_i) \dmu_j(x_j) \dH^1(z). \end{align*}
As $D^2 V =Dv$ one gets the $\curl$-free Lipschitz truncation:
    \begin{equation} \label{eq:2Dtrunc}
        \tilde{v}(y) = \left\{  
                            \begin{array}{ll} \sum_{i \in \N} \phi_i(y) \int v(x_i) \dmu_i(x_i) + \sum_{i,j \in \N} \Cor_{i,j} & y \in X^C, \\
                            v(y) & y \in X,
                            \end{array}
                        \right.
    \end{equation}
  where the corrector term $\Cor_{i,j} \in C^{\infty}(Q_i \cap Q_j,\R^2)$ is defined as 
    \begin{equation}   \label{eq:2Dcor}
  \Cor_{i,j} =\sum_{i,j\in \N} \phi_j D \phi_i \fint_{[x_i,x_j]} \iint (x_i-x_j) \cdot Dv(z) \cdot (y-z) \dmu_i(x_i) \dmu_j(x_j) \dH^1(z).
    \end{equation}  
    
\begin{rem}
\begin{enumerate} [label=(\alph*)]
    \item The definition of $\tilde{v}$ in \eqref{eq:2Dtrunc} is independent of its potential. So formula \eqref{eq:2Dtrunc} also gives a truncation for functions which are in $W^{1,p}(\R^2,\R^2)$.
    \item When deriving \eqref{eq:2Dtrunc}, we never used that the space dimension is $N=2$, only that $\curl v=0$. Indeed, \eqref{eq:2Dtrunc} gives a $\curl$-free Lipschitz truncation in any dimension.
    
\end{enumerate}
\end{rem}
\section{Solenoidal Lipschitz truncation} \label{sec:trunc}

This section is concerned with the proof of Theorem \ref{thmintro:A}. Its proof is split into several independent steps.

As the proof is quite involved we proceed as follows. First, we give the definition of the truncation and the main lemmas to obtain Theorem \ref{thmintro:A}. Then, we gather some preliminary results which are necessary to prove these lemmas (cf. Section \ref{sec:intermezzo}). Finally, we give the proofs in Section \ref{sec:proofs}.

\subsection{Definition of the truncation} \label{sec:truncdefi}

Let $X \subset \R^d$ be a closed set and $(Q_i)_{i \in \N}$ be a Whitney cover of $X^C$ featuring a partition of unity $\phi_i$ and measures $\mu_i$ (cf. Section \ref{sec:WC}). For simplicity, we write 
    \[
   \dmu_{i,j} := \dmu_i(x_i) \dmu_j(x_j), \quad \dmu_{i,j,k}= \dmu_i(x_i) \dmu_j(x_j) \dmu_k(x_k)
    \]
etc. For points $x_{i_1},\ldots,x_{i_k} \in \R^3$ denote by \[
\Sim(i_1,\ldots,i_k) = \Sim(x_{i_1},\ldots,x_{i_k)} := \conv (x_{i_1},...,x_{i_k})
\]
the convex hull of those points. If $k=2$, we often denote this by the direct line $[x_{i_1},x_{i_2}]:=\Sim(i_1,i_2)$ between these points. If $k=2,3,4$, we call this object a non-degenerate simplex if its dimension is $(k-1)$, i.e. $\Haus^{k-1}(\Sim(i_1,\ldots,i_k)))>0 $.

\begin{defi} \label{defi:trunc}
Given $u \in W^{1,p}(\R^3,\R^3)$, tuples $(a,b,c) \in \{(1,2,3),(2,3,1),(3,1,2)\}$ and $X \subset \R^d$, we define the truncation operator $T$ as follows: \begin{equation} \label{trunc:1}
    (T u (y)) _a= \left\{ 
                    \begin{array} {ll}
                        \sum_{i \in \N} \phi_i(y) \int u_a(x_i) \dmu_i(x_i) + (S u(y))_a + (Ru(y))_a & y \in X^C \\
                        u (y) & y \in X 
                    \end{array} 
                \right.
\end{equation}
where the error terms $S$, $R$ are defined below. First, let us write 
    \[
        T_0 u := T u - S u -R u.
    \] 
We define $S$ as:
    \begin{align}
        &(S u)_a:= -\tfrac{1}{2} \sum_{i,j \in \N} \left(\phi_j \partial_b \phi_i (A_{ab}(i,j) - A_{ba}(i,j) \right) + \left(  \phi_j \partial_c \phi_i (A_{ac}(i,j) - A_{ca}(i,j))\right), \\
        &A_{\alpha \beta}(i,j) :=\int \fint_{[x_i,x_j]}  D u_\beta(z) \cdot (x_i-x_j) (y-z)_{\alpha} \dH^1(z) \dmu_{i,j}, \label{def:Aij}
    \end{align}
if $i \neq j$ and $A_{\alpha \beta}(i,i) =0$ for all $i \in \N$. 

\noindent The corrector $R$ is defined via
    \begin{align}
        &(R u)_a := -\sum_{i,j,k \in \N} \phi_k \partial_b \phi_j \partial_c \phi_i B(i,j,k), \\
        &B(i,j,k) := \int \fint_{\Sim(i,j,k)} \tfrac{1}{2} (x_i-x_j) \times (x_j-x_k) \cdot \Bigl(\partial_1 u (y-z)_1 + \partial_2 u (y-z)_2  \\&\hspace{4cm}  +  \partial_3 u (y-z)_3 \Bigr) \dH^{2}(z) \dmu_{i,j,k}, \label{def:Bijk}
     \end{align}
if $i$, $j$ and $k$ are pairwise disjoint and    $B(i,i,k)=B(i,j,i)=B(i,j,j)=0$. 
\end{defi}

In this section we prove the following statement: 

\begin{thm} \label{sec3:main}
    If $u \in W^{1,p}(\R^3,\R^3)$ and $\lambda>0$, set 
        \[
        X = X_{\lambda} = \{ \M u \leq \lambda \} \cap \{ \M (D u) \leq \lambda \}.
        \]
    Then the truncated $T u$ defined in Definition \ref{defi:trunc} satisfies all the assertions of Theorem \ref{thmintro:A} with $L=\tfrac{\lambda}{2}$. 
\end{thm}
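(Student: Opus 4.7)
The plan is to verify the four properties (a)--(d) of Theorem \ref{thmintro:A} in turn. Once the divergence-free property (a) is established, (b)--(d) follow by a direct adaptation of the Whitney--Acerbi--Fusco scheme underlying Lemma \ref{lem:Lipschitztrunc}, so the core of the proof is (a), which dictates the specific form of the correctors $S$ and $R$.

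\textbf{Divergence-free property.} On the interior of $X$ one has $Tu=u$, which is divergence-free by hypothesis. On $X^C$ the sums defining $Tu$ are locally finite (only boundedly many $\phi_i$ are nonzero at any point), so the divergence can be computed term by term. The strategy is a telescoping cascade based on the partition of unity identities $\sum_i\phi_i\equiv 1$ and $\sum_i D\phi_i\equiv 0$ on $X^C$. Starting from
\[
\divergence T_0 u(y)=\sum_{a=1}^3\sum_i\partial_a\phi_i(y)\int u_a(x_i)\,\dmu_i(x_i),
\]
one inserts $1=\sum_j\phi_j$ and uses $\sum_i\partial_a\phi_i\equiv 0$ to rewrite this as a balanced double sum of differences $u_a(x_i)-u_a(x_j)$; the fundamental theorem of calculus along $[x_i,x_j]$ expresses each such difference as a line integral of $Du$, producing precisely the line-integral structure appearing in $A_{\alpha\beta}(i,j)$. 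The antisymmetric combination $A_{\alpha\beta}-A_{\beta\alpha}$ in the definition of $Su$ is engineered so that $\divergence(T_0 u+Su)$ reduces to a \emph{triple}-sum residual of the form $\sum_{i,j,k}\phi_k\,\partial\phi_j\,\partial\phi_i\cdot(\cdots)$. Applying the same telescoping trick a second time, now on the triangle $\Sim(i,j,k)$ via Stokes' theorem (so that the boundary line integrals of the previous step become a surface integral of second derivatives over $\Sim$), yields exactly the expression that $Ru$ cancels; the cross product $(x_i-x_j)\times(x_j-x_k)$ in $B(i,j,k)$ is precisely the (unnormalised) normal to that triangle and enters through Stokes. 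Continuity of $Tu$ across $\partial X$ then follows since $Su$ and $Ru$ carry factors $D\phi_i$, $D^2\phi_i$ of sizes $l(Q_i)^{-1}$, $l(Q_i)^{-2}$, while the line and surface integrals in $A$ and $B$ come with compensating powers of $l(Q_i)$; combined with a standard Lebesgue-point argument for the leading term $\sum_i\phi_i\int u(x_i)\,\dmu_i$, this gives $Tu\to u$ as $y\to\partial X$ and hence $\divergence Tu=0$ in the distributional sense globally.

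\textbf{Pointwise bounds and the remaining claims.} For (b), Lemma \ref{lem:AF} applied to $X_\lambda$ gives $|u(x_i)-u(x_j)|\le C\lambda|x_i-x_j|\le C\lambda\, l(Q_i)$ for the supports of $\mu_i$, $\mu_j$ attached to neighbouring cubes (the projection points and the supports of $\mu_i$ lying within distance comparable to $l(Q_i)$ from $Q_i$). Combined with $|D^k\phi_i|\le C l(Q_i)^{-k}$ from \eqref{est:phiD} and local finiteness of the sums, each summand of $T_0 u$, $Su$, $Ru$ is bounded by $C\lambda$. The same bookkeeping applied to $D(T_0 u+Su+Ru)$, where one more round of the partition-of-unity trick creates an additional difference of $u$'s (costing one extra derivative on $\phi_i$ but gaining one extra factor $l(Q_i)$ from $|x_i-x_j|$), yields $\|D(Tu)\|_{L^\infty(X^C)}\le C\lambda$; on $X$ the gradient of $Tu$ coincides with $Du$, which is $L^\infty$-controlled through $\M(Du)\le\lambda$ at Lebesgue points. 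Since $Tu\equiv u$ on $X$, the set $\{Tu\neq u\}$ is contained in $X_\lambda^C$, and (c), (d) then follow from the Zhang-type estimate \eqref{Zhang:set:est} applied to $u$ and $Du$ at level $\lambda$.

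\textbf{Main obstacle.} The genuine bottleneck is (a), i.e.\ the identity $\divergence(T_0 u+Su+Ru)=0$ on $X^C$. It requires a careful combinatorial argument with triple sums, antisymmetrisation of $A_{\alpha\beta}$, and two independent applications of Stokes' theorem (on segments and on triangles) to transfer derivatives from $u$ onto the cutoffs $\phi_i$. I would carry out this cancellation first (this is the content deferred to Section \ref{sec:intermezzo}), since it simultaneously justifies the otherwise mysterious choice of $A$ and $B$; the estimates for (b)--(d) then reduce to routine bookkeeping that closely parallels the scalar Whitney extension of Section \ref{sec:WE}.
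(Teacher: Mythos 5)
Your decomposition of the proof into (i) divergence-freeness on $X^C$ via double telescoping and two applications of Stokes' theorem, (ii) a $W^{1,\infty}$ bound from partition-of-unity bookkeeping, and (iii) the Zhang-type bad-set estimate exactly mirrors the paper's factoring into Lemmas \ref{lem:divfree}, \ref{lem:Lipschitz} and \ref{lem:badset}, and your account of part (a) — antisymmetrising $A_{\alpha\beta}$, promoting line integrals on $[x_i,x_j]$ to a surface integral on $\Sim(i,j,k)$, then annihilating the resulting triple-sum residual with $B$ via Gau\ss--Green — is correct in spirit. (A small slip: the Stokes step produces a surface integral of $\partial_c u$, not of second derivatives; the second derivatives coming from $\curl(Du_b\cdot)$ cancel and $\divergence u=0$ is what collapses the remainder to $\partial_c u$.)

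However, your treatment of (b) has a genuine gap. You invoke Lemma \ref{lem:AF} to bound $|u(x_i)-u(x_j)|$ for $x_i,x_j$ in the supports of $\mu_i,\mu_j$, but those supports are $\tfrac12 Q_i^\ast\subset X_\lambda^C$: the pointwise Lipschitz estimate of Lemma \ref{lem:AF} is available only on $X_\lambda$ itself and gives no control on pointwise values of $u$ inside the bad set. More importantly, the summands of $Su$ and $Ru$ require bounds $|A_{\alpha\beta}(i,j)|\le C\lambda\,l(Q_i)^2$ and $|B(i,j,k)|\le C\lambda\,l(Q_i)^3$, and these involve averaged integrals of $|Du|$ over segments and triangles — absolute-value integrals, which are not controlled by the signed fundamental-theorem identity underlying the Lipschitz estimate. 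The paper's non-routine ingredient here is Lemma \ref{lem:help}: a Fubini/coarea argument that converts the $\mu_{i,j}$- resp.\ $\mu_{i,j,k}$-averaged lower-dimensional integrals into a bulk integral of $|Du|$ over a ball of radius $\sim l(Q_i)$, which is then $\le C\,l(Q_i)^3\,\M(Du)(z_i)\le C\,l(Q_i)^3\lambda$ at a projection point $z_i\in X_\lambda$. Calling the $S$, $R$ bounds "routine bookkeeping" misses that this is where the real work of Lemma \ref{lem:Lipschitz} happens; without Lemma \ref{lem:help}, the $W^{1,\infty}$ bound on $Su$, $Ru$ is not established.
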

    
To this end, in Section \ref{sec:proofs}, we proof the following auxiliary results:

\begin{lem} \label{lem:divfree}
On the `bad set' $X^C$, we have $T u \in C^{\infty}(X^C,\R^3)$. Moreover, the strong derivative satisfies for all $y \in X^C$
    \[
     \divergence T u(y) = \divergence T_0 u(y) + \divergence S u(y)+ \divergence R u(y)=0
     \]
whenever $\divergence u =0$. 
\end{lem}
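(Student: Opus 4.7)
The plan is to dispatch smoothness by local finiteness of the Whitney cover, and then to prove $\divergence T u = 0$ by a three-stage telescoping cancellation: the direct computation of $\divergence T_0 u$ produces a differentiated ``difference'' term that is cancelled by the contraction $\partial_{y_a} A_{a\beta}$ inside $\divergence S u$; the leftover $\partial\phi \cdot \partial\phi \cdot A$-term is in turn matched by $\divergence R u$ via Stokes' theorem on the triangles $\Sim(i,j,k)$, and whatever remains is proportional to $\divergence u$, which vanishes by hypothesis. Smoothness on $X^C$ is immediate: near any fixed $y \in X^C$ only finitely many $\phi_i, \phi_j, \phi_k$ are nonzero, and the $y$-dependence of $A_{\alpha\beta}$ and $B$ enters only linearly through the factor $(y-z)_\alpha$, so each sum in (\ref{trunc:1}) is a finite combination of $C^\infty$ functions of $y$.

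For the first two cancellations I differentiate $T_0 u$ directly and use $\sum_j \phi_j \equiv 1$ on $X^C$ (whence $\sum_i \partial_a \phi_i \equiv 0$) to symmetrize in a dummy index $j$; combining with the fundamental theorem of calculus along the segment $[x_j, x_i]$ yields
$$\divergence T_0 u = \sum_{a,i,j} \phi_j \partial_a \phi_i \int \fint_{[x_i,x_j]} D u_a(z) \cdot (x_i - x_j) \dH^1(z) \dmu_{i,j}.$$
Since the $y$-dependence of $A_{\alpha\beta}(i,j,y)$ in (\ref{def:Aij}) is linear through $(y-z)_\alpha$, one obtains the crucial identity $\partial_{y_a} A_{a\beta}(i,j,y) = \int \fint_{[x_i,x_j]} D u_\beta(z) \cdot (x_i - x_j) \dH^1(z) \dmu_{i,j}$, a quantity independent of $y$. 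Expanding $\divergence S u = \sum_a \partial_a (S u)_a$ by Leibniz then produces three groups of terms: a $\phi_j (\partial_a \partial_\beta \phi_i)$-group vanishing by symmetry of the Hessian against antisymmetry of $A_{a\beta} - A_{\beta a}$ in $(a,\beta)$; a $\phi_j (\partial_\beta \phi_i) \partial_{y_a}(A_{a\beta} - A_{\beta a})$-group which, by the above identity together with $\partial_{y_a} A_{\beta a} = 0$ for $\beta \neq a$, equals exactly $-\divergence T_0 u$ (the $\tfrac12$ in Definition \ref{defi:trunc} being cancelled by the two admissible choices of $\beta \neq a$); and a residual $(\partial_a \phi_j)(\partial_\beta \phi_i)(A_{a\beta} - A_{\beta a})$-group. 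Consequently $\divergence(T_0 u + S u) = -\tfrac{1}{2} \sum_{a,\beta \neq a, i,j} \partial_a \phi_j \partial_\beta \phi_i (A_{a\beta}(i,j) - A_{\beta a}(i,j))$.

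The decisive last stage matches this residual against $\divergence R u$ using Stokes' theorem on the triangles $\Sim(i,j,k)$. Inserting $\sum_k \phi_k \equiv 1$ to introduce a third index $k$, then exploiting the antisymmetries $A_{a\beta}(i,j) = -A_{a\beta}(j,i)$ and $A_{a\beta} - A_{\beta a} = -(A_{\beta a} - A_{a\beta})$ together with $\sum_\ell \partial_a \phi_\ell \equiv 0$, one reorganizes the resulting triple sum so that the three line integrals over $[x_i,x_j]$, $[x_j,x_k]$ and $[x_k,x_i]$ close up along the oriented boundary $\partial \Sim(i,j,k)$. Stokes' theorem then rewrites this boundary integral as a surface integral over $\Sim(i,j,k)$ weighted by $v = (x_i - x_j) \times (x_j - x_k)$, precisely the factor appearing in (\ref{def:Bijk}). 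Applying the same Leibniz-and-symmetry recipe to $\divergence R u$ (where the Hessian-of-$\phi$ pieces now vanish by symmetry of $\partial_a \partial_b \phi$ against antisymmetry of $\epsilon_{abc}$) and comparing $\partial_{y_a} B$ computed from (\ref{def:Bijk}) with the Stokes surface integrand, the $\curl u$-part is absorbed by $\divergence R u$, leaving only a multiple of $\divergence u$, which is zero. The main obstacle is precisely this Stage 3: tracking signs and orientations so that the triple-antisymmetrized boundary line integrals align consistently along $\partial \Sim(i,j,k)$, and verifying that after Stokes the surface integrand splits cleanly into a $\curl u$-part (cancelled by $\divergence R u$) plus a $\divergence u$-part (vanishing), with all combinatorial factors matching between $S$ and $R$.
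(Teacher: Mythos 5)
Your proposal follows the paper's strategy faithfully: smoothness via local finiteness of the cover, the partition-of-unity telescoping trick plus the fundamental theorem to compute $\divergence T_0 u$, Leibniz expansion with Hessian-antisymmetry cancellations in $\divergence Su$ and $\divergence Ru$, Stokes' theorem on the triangles $\Sim(i,j,k)$ to convert the $\partial_a\phi_j\partial_\beta\phi_i(A_{a\beta}-A_{\beta a})$ residual into the $\partial_a B = C_a(i,j,k)$ terms, and finally Gau{\ss}--Green on tetrahedra (after inserting a fourth dummy index $\ell$) to annihilate the remaining $\partial\phi_k\partial\phi_j\partial\phi_i\,B$ term using $\divergence u=0$ --- this is precisely the paper's Lemma~\ref{Stokesappl} and the ensuing computation. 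The one piece of loose wording is ``$\curl u$-part'': what actually appears after applying \eqref{eq:Stokes} is $\curl_z$ of the $A$-integrand, which collapses to $\partial_c u$ only once $\divergence u=0$ is invoked (cf.\ \eqref{comp:1}); aside from that terminological slip the combinatorics, signs, and cancellations you describe are the paper's own.
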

This lemma is a pure calculation, but depends on certain cancellations and use of Stokes' theorem. For the readers convenience, we write down the divergence of the different summands of $T$. The detailed computation can be found in Section \ref{sec:proofs}. For the divergences of the single terms we have 
    \begin{align*}
        \divergence T_0 u(y) &= \sum_{a=1}^3 \sum_{i,j \in \N} \phi_j \partial_a \phi_i \int \fint_{[x_i,x_j]} Du_a(z) \cdot (x_i-x_j) \dH^{1}(z) \dmu_{i,j},\\
        \divergence S u(y) &= - \divergence T_0(y) - \sum_{i,j \in \N} \Bigl[ \bigl(\partial_1 \phi_j \partial_2 \phi_i (A_{12}(i,j) - A_{21}(i,j))\bigr)\\& + \bigl(\partial_2 \phi_j \partial_3 \phi_i (A_{23}(i,j) - A_{32}(i,j))\bigr) +\bigl(\partial_3 \phi_j \partial_1 \phi_i (A_{31}(i,j) - A_{13}(i,j))\bigr)\Bigr], \\
        \divergence Ru (y) &= -(\divergence T_0(y) + \divergence Su(y)).
    \end{align*}

The next lemma shows that, indeed $T$ is a well-defined truncation operator and maps into Lipschitz functions.

\begin{lem} \label{lem:Lipschitz}
Let $u \in W^{1,p}(\R^3,\R^3)$ and $X=X_{\lambda}$. Then we have 
    \begin{enumerate}[label=(\alph*)]
        \item \label{lem:Lipschitz:item1} $T_0 u \in W^{1,\infty}(\R^3,\R^3)$ and $\Vert T_0 u \Vert_{W^{1,\infty}} \leq C \lambda$;
        \item \label{lem:Lipschitz:item2}$Su,~Ru \in W^{1,1}_0(X^C,\R^3)$, in particular the sums in the definition of $R$ and $S$ converge absolutely in $W^{1,1}$;
        \item \label{lem:Lipschitz:item3}$Su,~Ru \in W^{1,\infty}(\R^3,\R^3)$ and $\Vert S u \Vert_{W^{1,\infty}} + \Vert R u \Vert_{W^{1,\infty}} \leq C \lambda$.
    \end{enumerate}
\end{lem}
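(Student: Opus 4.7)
The plan is to dispatch part \ref{lem:Lipschitz:item1} by the classical Whitney argument and to reduce parts \ref{lem:Lipschitz:item2}--\ref{lem:Lipschitz:item3} to uniform pointwise bounds on the kernels $A_{\alpha\beta}(i,j)$ and $B(i,j,k)$, which then feed into the derivative estimates for $Su$ and $Ru$ via the partition-of-unity bounds \eqref{est:phiD}.

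For \ref{lem:Lipschitz:item1}, $T_0 u$ is precisely the Whitney truncation \eqref{eq:Lipequation} applied to $u$ (with the $L^{\infty}$-bound $\Vert u \Vert_{L^{\infty}(X_\lambda)} \le C\lambda$ and the $C\lambda$-Lipschitz bound on $u|_{X_\lambda}$ coming from Lemma \ref{lem:AF} at Lebesgue points). On $X^C$ I would differentiate term by term and introduce a second index via $\sum_j\phi_j \equiv 1$ to write
\[
D T_0 u(y) = \sum_{i,j \in \N} \phi_j(y)\, D\phi_i(y) \int \bigl(u(x_i)-u(x_j)\bigr)\, \dmu_i(x_i)\,\dmu_j(x_j).
\]
The estimate \eqref{est:phiD}, the local finiteness of the cover, the projection-point estimate $\vert x_i-x_j\vert \le Cl(Q_i)$ for neighboring cubes, and the Lipschitz bound on $X_\lambda$ then give $\Vert D T_0 u \Vert_{L^{\infty}}\le C\lambda$. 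Continuity of $T_0 u$ across $\partial X$ (and thus the global Lipschitz bound) follows the usual Whitney/Stein argument: as $y \to x_0 \in \partial X$, only cubes $Q_i$ with $l(Q_i)\le C\vert y-x_0\vert$ contribute, so $\int u\,\dmu_i\to u(x_0)$.

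For \ref{lem:Lipschitz:item2}--\ref{lem:Lipschitz:item3}, the crux is the pointwise estimate
\[
\vert A_{\alpha\beta}(i,j)\vert \le C\lambda\, l(Q_i)^2,\qquad \vert B(i,j,k)\vert \le C\lambda\, l(Q_i)^3,
\]
valid whenever the supports of the relevant $\phi$'s share a point $y\in X^C$. The geometric factors $\vert x_i-x_j\vert$ and $\vert y-z\vert$ each contribute $O(l(Q_i))$ since neighboring cubes have comparable sidelength and $x_i\in X_\lambda$ lies within $Cl(Q_i)$ of $Q_i$. The remaining line/surface average of $\vert Du\vert$ weighted against $\dmu_i\,\dmu_j$ (resp.\ $\dmu_i\,\dmu_j\,\dmu_k$) I would reinterpret by Fubini as a bounded-density volume integral of $\vert Du\vert$ over a region contained in a ball $B(x^{\ast},Cl(Q_i))$ for some $x^{\ast}\in X_\lambda$. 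Since $\M(Du)(x^{\ast})\le \lambda$, this volume average is bounded by $C\lambda$. Combined with $\Vert D^m \phi_i\Vert_{L^{\infty}}\le Cl(Q_i)^{-m}$, the bounded overlap of the cover, and analogous bounds for the $y$-derivatives of $A_{\alpha\beta}$ and $B$ (the only $y$-dependence sits in the factor $(y-z)_\alpha$, which differentiates cleanly), one obtains
\[
\vert Su(y)\vert+\vert Ru(y)\vert \le C\lambda\, l(Q_i),\qquad \vert D(Su)(y)\vert+\vert D(Ru)(y)\vert\le C\lambda
\]
on each $Q_i$. The first bound shows $Su, Ru$ vanish continuously as $y\to X$ (since $l(Q_i)\to 0$), and together with $\mathcal{L}^3(X^C)<\infty$ (via Lemma \ref{lem:maximalf} applied to $u,Du\in L^p$) this gives absolute $W^{1,1}$-convergence of the locally finite sums, proving \ref{lem:Lipschitz:item2}. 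Extension by zero across $\partial X$ then gives \ref{lem:Lipschitz:item3}.

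I expect the main obstacle to lie in the change-of-variables step that converts the nested averages in $A_{\alpha\beta}(i,j)$ and $B(i,j,k)$ into a volume average of $\vert Du\vert$: the pushforward Jacobian degenerates when the simplex $\Sim(x_i,x_j,x_k)$ (or the segment $[x_i,x_j]$) becomes thin. The definitions already annihilate the fully degenerate cases $A(i,i)=0$ and $B(i,i,k)=B(i,j,i)=B(i,j,j)=0$, so the difficulty is confined to almost-degenerate configurations, which I expect to handle by the preliminary geometric lemmas of Section \ref{sec:intermezzo}: the bounded sidelength ratios in the Whitney cover ensure that whenever $Q_i,Q_j,Q_k$ share a point, the simplex spanned by the supports of $\mu_i,\mu_j,\mu_k$ already has diameter and volume comparable to $l(Q_i)$ and $l(Q_i)^3$ respectively, with a purely dimensional density constant in the pushforward.
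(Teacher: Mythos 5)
Your overall plan matches the paper's: part \ref{lem:Lipschitz:item1} is the standard Whitney argument, and parts \ref{lem:Lipschitz:item2}--\ref{lem:Lipschitz:item3} reduce to the pointwise kernel bounds $\vert A_{\alpha\beta}(i,j)\vert\le C\lambda\, l(Q_i)^2$, $\vert B(i,j,k)\vert\le C\lambda\, l(Q_i)^3$ (and the corresponding bounds with one fewer power of $l(Q_i)$ for $D_y A$, $D_y B$), combined with \eqref{est:phiD}, the bounded overlap of the cover, and $\LL^3(X^C)<\infty$. Those exponents and that reduction are exactly what the paper does, and the rest of your argument (absolute $W^{1,1}$-convergence, $W^{1,\infty}$-bound by local finiteness, vanishing near $\partial X$) is correct.

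The gap is in your proposed resolution of the ``thin simplex'' issue. You claim the bounded sidelength ratios in the Whitney cover ensure that the pushforward of $\mu_i\otimes\mu_j\otimes\mu_k$ under $(x_i,x_j,x_k)\mapsto z\in\Sim(x_i,x_j,x_k)$ has a purely dimensional density bound, and you point to Section \ref{sec:intermezzo} for geometric lemmas. Neither is right: Section \ref{sec:intermezzo} contains the Stokes'-theorem identities, not geometric estimates, and the pushforward density is genuinely unbounded — nothing in the Whitney construction forbids three intersecting cubes from having nearly collinear centres, and even for well-separated cube centres, $x_k$ ranging over a full cube comes arbitrarily close to the line through $x_i,x_j$. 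What the paper actually proves (Lemma \ref{lem:help}) is the weaker, and sufficient, statement that the nested line/simplex average equals a volume integral of $\vert v\vert$ against a \emph{locally integrable} kernel: fixing $x_1$ (resp.\ $x_1,x_2$) and passing to polar (resp.\ cylindrical) coordinates gives a kernel $\sim \vert z-x_1\vert^{-\gamma}$ on the segment side, and $\sim\dist(z,[x_1,x_2])^{-1}$ on the simplex side, and these singularities are then killed by the remaining integration over $x_1$ (resp.\ $x_1,x_2$) over full-dimensional cubes. That change of order and the integrability of the resulting singularity is precisely the content you cannot replace with a ``bounded density'' claim, so as written your proof of \ref{lem:Lipschitz:item2}--\ref{lem:Lipschitz:item3} has a hole exactly at the step you flagged as the main obstacle.
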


The last ingredient towards showing Theorem \ref{sec3:main} is to bound the measure of the `bad set' $X_{\lambda}^C$, cf. \cite[Lemma 3.1]{Zhang}.

\begin{lem} \label{lem:badset}
Let $u \in W^{1,p}(\R^3,\R^3)$, $1 \leq p < \infty$. Then we may estimate
    \begin{equation} \label{est:badset}
        \LL^3(X_{\lambda}^C) \leq C \lambda^{-p} \int_{\{ \vert u \vert \geq \lambda/2\} \cup \{\vert D u \vert \geq \lambda/2\}} \vert u \vert^p + \vert D u \vert^p \dx.
    \end{equation}
\end{lem}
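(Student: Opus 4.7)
The plan is to follow the standard truncation-and-weak-type argument for the maximal function, treating $u$ and $Du$ separately and then summing the two resulting bounds. Since $X_\lambda^C = \{\M u > \lambda\}\cup\{\M(Du)>\lambda\}$, it is enough to show that each of these two sets is bounded by the right-hand side of \eqref{est:badset}; the arguments being identical, I would focus on the first.

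The truncation trick is to split $u = u^\flat + u^\sharp$ at level $\lambda/2$, with $u^\flat:=u\,\indicator_{\{\vert u\vert<\lambda/2\}}$ and $u^\sharp := u\,\indicator_{\{\vert u\vert\geq\lambda/2\}}$. Since $\Vert u^\flat\Vert_{L^\infty}\leq \lambda/2$, one has $\M u^\flat\leq \lambda/2$ pointwise, and the sublinearity of $\M$ recalled in Lemma \ref{lem:maximalf} forces the inclusion
\[
\{\M u>\lambda\}\subset\{\M u^\sharp>\lambda/2\}.
\]
Now $u^\sharp\in L^1(\R^3)$: on its support $\vert u\vert\geq\lambda/2$ and $u\in L^p$ with $p\geq 1$, so $\vert u^\sharp\vert\leq (2/\lambda)^{p-1}\vert u\vert^p\in L^1$. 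The weak-$(1,1)$ estimate from Lemma \ref{lem:maximalf} then yields
\[
\LL^3(\{\M u>\lambda\})\leq \frac{C}{\lambda}\,\Vert u^\sharp\Vert_{L^1} = \frac{C}{\lambda}\int_{\{\vert u\vert\geq\lambda/2\}}\vert u\vert\dx.
\]

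To convert this $L^1$-integral into the $L^p$-integral required by the lemma, I would reuse the same pointwise inequality $\vert u\vert\leq (2/\lambda)^{p-1}\vert u\vert^p$ (valid on $\{\vert u\vert\geq\lambda/2\}$ because $p\geq 1$), turning the previous bound into
\[
\LL^3(\{\M u>\lambda\})\leq C\lambda^{-p}\int_{\{\vert u\vert\geq\lambda/2\}}\vert u\vert^p\dx.
\]
Running the identical argument with $Du$ in place of $u$ and adding the two estimates produces \eqref{est:badset}. This is a routine calculation and I do not anticipate a genuine obstacle; the only subtle point is the choice of splitting threshold $\lambda/2$ rather than $\lambda$, which leaves just enough room for the sublinearity step to close.
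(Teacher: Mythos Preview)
Your argument is correct and follows essentially the same route as the paper: split at level $\lambda/2$, use sublinearity of the maximal function to reduce to the tail piece, and apply the weak-type estimate from Lemma~\ref{lem:maximalf}. The only cosmetic difference is that the paper truncates via $\tilde v=(\vert v\vert-\lambda/2)_+$ and invokes the weak-$(p,p)$ bound directly, whereas you use the hard cutoff $u^\sharp=u\,\indicator_{\{\vert u\vert\geq\lambda/2\}}$, apply the weak-$(1,1)$ estimate, and then convert to $L^p$ via the pointwise inequality on the support; both variants are standard and equivalent.
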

We summarise Lemmas \ref{lem:divfree}, \ref{lem:Lipschitz} and \ref{lem:badset} to obtain the proof of Theorem \ref{sec3:main}:

\begin{proof}[Proof of Theorem \ref{sec3:main}]
Lemma \ref{lem:divfree} and Lemma \ref{lem:Lipschitz} show that $Tu \in W^{1,\infty}(\R^3,\R^3)$ and that $\divergence Tu =0$. Furthermore, it gives the $W^{1,\infty}$-bound: 
    \[
    \Vert T u \Vert_{W^{1,\infty}} \leq C \lambda \leq CL.
    \] 
By definition, $u$ and $Tu$ coincide on the set $X_{\lambda}$ and, therefore,
    \[
    \LL^3( \{ u \neq T u \}) \leq \LL^3 ( X_{\lambda}^C) \leq C L^{-p} \LL^3(X_{\lambda}^C) \leq C \lambda^{-p} \int_{\{ \vert u \vert \geq L\} \cup \{\vert D u \vert \geq L\}} \vert u \vert^p + \vert D u \vert^p \dx.
    \]
This establishes property \ref{A:4} in Theorem \ref{thmintro:A}. It remains to show \ref{A:3}. This directly follows from \ref{A:4}, as 
    \begin{align*}
        \Vert u - T u \Vert_{W^{1,p}}^p &\leq \int_{ \{u \neq Tu \}} \vert u - T u \vert^p + \vert D u - D(Tu) \vert^p \dx \\
        &\leq C \int_{X_{\lambda}^C}\left( \vert u \vert +\vert Du \vert^p\right) + \left( \vert Tu \vert^p + \vert D(Tu) \vert^p \right) \dx \\
        &\leq C L^{-p} \int_{\{ \vert u \vert \geq L\} \cup \{\vert D u \vert \geq L\}} \vert u \vert^p + \vert D u \vert^p \dx + \Vert Tu \Vert_{W^{1,\infty}} \LL^3(X_{\lambda}^C) \\
        &\leq C \int_{\{ \vert u \vert \geq L\} \cup \{\vert D u \vert \geq L\}} \vert u \vert^p + \vert D u \vert^p \dx.
    \end{align*}
\end{proof}

\subsection{Intermezzo: Stokes' theorem} \label{sec:intermezzo}
In this section, we recall some versions of Stokes' theorem, which appear in the calculations verifying Lemma \ref{lem:divfree}. Stokes' theorem (which, in general is formulated for differential forms) appears in three instances: as the fundamental theorem, rewriting $u(x_i)-u(x_j)$; as the classical $3D$-Stokes, replacing a boundary integral of a triangle; and in the form of Gau{\ss}-Green.

Usually, one applies Stokes' theorem in situations, where $M$ is an $r$-dimensional manifold, $r=1,2,3$ with sufficiently smooth boundary. For sufficiently smooth functions this assumption can be replaced by Lipschitz boundary, e.g. the boundary of a simplex. 

For vectors $v,w \in \R^3$, recall that the cross-product is defined via 
    \[
        v \times w = \left( \begin{array}{c} v_2 w_3 -w_3 v_2 \\ v_3 w_1 - v_1 w_3 \\ v_1 w_2 - v_2 w_1 \end{array} \right),
    \]
and that for $f \in C^1(\R^3,\R^3)$ we define the rotation $\curl f \in C(\R^3,\R^3)$ as 
    \[
        \curl f= \nabla \times f = \left( \begin{array}{c} \partial_2 f_3 -\partial_3 f_2 \\ \partial_3 f_1 - \partial_1 f_3 \\ \partial_1 f_2 - \partial_2 f_1 \end{array} \right).
    \]
Denote by $\tau$ the tangential vector to a one-dimensional manifold and by $\nu$ the normal to a two-dimensional manifold (the orientation is given by below lemma). Further denote by $I_2$ the index set $I_2=\{(1,2),(2,3),(3,1)\}$ and the index set $I_3$ by $I_3= \{(1,2,3),(2,3,4),(3,4,1),(4,1,2)\}$.    
\begin{lem}[Stokes' theorem applied on simplices]
    Let $u \in C^1(\R^3)$ and $v,w \in C^1(\R^3,\R^3)$ and let $x_1, x_2, x_3, x_4 \in \R^3$ such that $\Sim(1,2,3,4)$ is non-degenerate. Then we have the following:
       \begin{align}
           u(x_i)-u(x_j) &= \fint_{[x_i,x_j]} Du(z) \cdot (x_i-x_j) \dH^1(z) \label{eq:fund} \tag{FT}\\
           \int_{\partial \Sim(1,2,3)} \tau(z) v(z) \dH^1(z) &= \sum_{(a,b) \in I_2} \fint_{[x_a,x_b]} v(z) \cdot (x_a-x_b) \dH^1(z)\nonumber\\
           &= \int_{\Sim(1,2,3)} \curl v(z) \cdot \nu(z) \dH^2(z) \nonumber \\
           &= \fint_{\Sim(1,2,3} \left(\tfrac{1}{2} (x_1-x_2) \times (x_2-x_3)\right) \cdot \curl v(z) \dH^2(z). \label{eq:Stokes} \tag{St}\\
           \int_{\partial \Sim(1,2,3,4)} \nu(z) \cdot w(z) \dH^2(z) &= \sum_{(a,b,c) \in I_3} \fint_{\Sim(a,b,c)}  \left(\tfrac{1}{2} (x_a-x_b) \times (x_b-x_c)\right) \cdot w(z) \dH^2(z)  \nonumber \\
           &= \int_{\Sim(1,2,3,4)} \divergence w(z) \dH^2(z) \cdot \sgn(i,j,k,l). \label{eq:Gauss} \tag{GG}
        \end{align}
        where $\sgn(i,j,k,l) \in \{\pm 1\}$ is a suitable sign function denoting the orientation of the simplex.
\end{lem}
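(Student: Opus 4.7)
The plan is to deduce all three identities from the classical Stokes theorem for smooth differential forms on flat Lipschitz simplices, combined with the elementary formula for the oriented volume element of an affine simplex. The single algebraic identity I would use throughout is that, for three non-collinear points $x_1, x_2, x_3 \in \R^3$, the triangle $\Sim(1,2,3)$ is planar with constant unit normal $\nu$ and
\[
\tfrac{1}{2}(x_1 - x_2) \times (x_2 - x_3) \;=\; \tfrac{1}{2}(x_2 - x_1) \times (x_3 - x_1) \;=\; \Haus^2(\Sim(1,2,3)) \, \nu,
\]
where $\nu$ is induced by the right--hand rule from the cyclic order of the vertices. The analogous identity for a flat $2$-simplex inside a tetrahedron gives the corresponding outward area vector for each face of $\Sim(1,2,3,4)$.

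For \eqref{eq:fund} I would simply parametrize $[x_j, x_i]$ by $\gamma(t) = x_j + t(x_i - x_j)$, apply the fundamental theorem of calculus to $u \circ \gamma$, and divide by $\Haus^1([x_i,x_j]) = |x_i - x_j|$ to rewrite the result as the stated averaged line integral. This is routine and presents no obstacle.

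For \eqref{eq:Stokes}, the first equality is direct: on each edge of $\partial \Sim(1,2,3)$ the tangent $\tau$ is constant, so the edge integral of $v \cdot \tau$ becomes an average against $\pm(x_a - x_b)$ with sign determined by the cyclic orientation. The second equality is the classical Kelvin--Stokes theorem on the flat triangle, and the third equality is an instance of the geometric identity displayed above, which converts $\int \curl v \cdot \nu \dH^2$ over $\Sim(1,2,3)$ into an averaged integral against $\tfrac{1}{2}(x_1 - x_2)\times (x_2 - x_3)$. For \eqref{eq:Gauss} I would apply the classical divergence theorem on the tetrahedron $\Sim(1,2,3,4)$ and treat each of its four triangular faces exactly as in \eqref{eq:Stokes}: the boundary integral over each face $\Sim(a,b,c)$, $(a,b,c) \in I_3$, becomes an averaged integral of $w$ against $\pm \tfrac{1}{2}(x_a - x_b) \times (x_b - x_c)$, with the per--face sign encoding whether the cyclic order $(a,b,c)$ agrees with the outward normal of that face. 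Collecting the four face signs produces the single global orientation factor $\sgn(i,j,k,l)$.

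The only genuine obstacle is the consistent bookkeeping of orientations. In \eqref{eq:Stokes} the cyclic direction on $\partial \Sim(1,2,3)$ must be compatible with the direction of $\nu$, while in \eqref{eq:Gauss} the cyclic orders on the four faces must all correspond to the outward normal of the tetrahedron. Once these sign conventions are fixed --- which is exactly the role of the factor $\sgn(i,j,k,l)$ --- the identities follow from the classical integration theorems and the volume formula for flat simplices without any further analytic input.
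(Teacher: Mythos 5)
The paper states this lemma without proof, treating it as a packaged, scaled form of the classical Stokes-type theorems; it only remarks afterwards that \eqref{eq:fund} is applied ``straightforwardly, possibly by a density argument.'' Your proposal supplies exactly the standard argument the paper implicitly appeals to, and it is correct. The decisive ingredient you isolate, namely the area-vector identity
\[
\tfrac{1}{2}(x_1-x_2)\times(x_2-x_3)=\Haus^2\bigl(\Sim(1,2,3)\bigr)\,\nu,
\]
together with the observation that the normal (resp.\ tangent) is \emph{constant} on each flat face (resp.\ edge), is precisely what converts the classical unnormalised integrals of the fundamental theorem, Kelvin--Stokes and Gau{\ss}--Green into the averaged $\fint$-form used throughout the paper. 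Your treatment of \eqref{eq:Gauss} by applying the divergence theorem to the tetrahedron and then handling each triangular face as in \eqref{eq:Stokes} is the right route, and your identification of orientation bookkeeping as the only delicate point matches the paper's own attitude: the paper deliberately sweeps the signs into the factor $\sgn(i,j,k,l)$ and the phrase ``the orientation is given by below lemma.'' One small remark worth keeping in mind when you fill in the sign computations: the sum $\sum_{(a,b)\in I_2}\fint_{[x_a,x_b]}v\cdot(x_a-x_b)\dH^1$ encodes the boundary traversal $x_3\to x_2\to x_1\to x_3$, whose right-hand-rule normal is $(x_2-x_3)\times(x_1-x_2)=-\,(x_1-x_2)\times(x_2-x_3)$; so the identity holds with $\nu$ understood as the normal chosen so that the final line of \eqref{eq:Stokes} is consistent, which is exactly the paper's implicit convention. (Also, purely as a typographical matter in the source you are proving: the last integral in \eqref{eq:Gauss} is over the three-dimensional simplex and should carry $\dL^3$ rather than $\dH^2$, and the sign factor should read $\sgn(1,2,3,4)$; neither affects the mathematics.)
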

Stokes' theorem (in the form \eqref{eq:fund}) is applied to $u_a(x_i)-u_a(x_j)$ straightforwardly, possibly by a density argument.
\begin{lem}[Application of Stokes' theorem  to the truncation] \label{Stokesappl}
Let $u \in W^{1,p}(\R^3,\R^3)$ satisfying $\divergence u=0$, $i,j,k,l \in \N$. Then, we have
    \begin{align} 
        &\left(A_{ab}(i,j)-A_{ab}(i,k)-A_{ab}(k,i)\right)-\left(A_{ba}(i,j)-A_{ba}(i,k)-A_{ba}(k,j)\right) \nonumber
        \\&= \int \fint_{\Sim(i,j,k)} \tfrac{1}{2}(x_i-x_j)\times(x_j-x_k) \cdot \partial_c u \dH^2(z) \dmu_{i,j}\label{Stokes:A}
\end{align}
and
    \begin{align} \label{Stokes:B}
        B(i,j,k) -B(i,j,l) - B(i,l,k) - B(l,j,k) =0.
    \end{align}
\end{lem}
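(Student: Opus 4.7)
I would prove the two identities separately, both by applying Stokes' theorem in the forms \eqref{eq:Stokes} and \eqref{eq:Gauss} to vector fields built out of $u$ and the affine factor $(y-z)$.

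For \eqref{Stokes:A}, my plan is to introduce, for indices $(\alpha,\beta)$, the vector field
\[
v^{\beta,\alpha}(z) := (y-z)_\alpha \, Du_\beta(z) \in \R^3,
\]
so that $A_{\alpha\beta}(p,q) = \int \fint_{[x_p,x_q]} v^{\beta,\alpha}(z) \cdot (x_p-x_q) \dH^1(z) \dmu_{p,q}$. The antisymmetry $A_{\alpha\beta}(p,q) = -A_{\alpha\beta}(q,p)$, immediate from reversing the orientation of $[x_p,x_q]$, lets me rewrite each of the two bracketed combinations on the left-hand side as a cyclic sum around the boundary of $\Sim(i,j,k)$, so that \eqref{eq:Stokes} converts them into
\[
\int \fint_{\Sim(i,j,k)} \tfrac{1}{2}(x_i-x_j)\times(x_j-x_k) \cdot \curl v^{\beta,\alpha}(z) \dH^2(z) \dmu.
\]
A direct calculation shows the second-order derivatives cancel, leaving $(\curl v^{\beta,\alpha})_c = \delta_{q\alpha}\,\partial_p u_\beta - \delta_{p\alpha}\,\partial_q u_\beta$ for $(p,q,c)$ cyclic. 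For $(a,b,c)$ cyclic, a component-by-component check then shows $(\curl v^{b,a} - \curl v^{a,b})_\gamma = \partial_c u_\gamma$ for $\gamma\in\{a,b\}$, while for $\gamma=c$ the expression equals $-\partial_a u_a - \partial_b u_b$, which becomes $\partial_c u_c$ by $\divergence u=0$. Hence $\curl v^{b,a} - \curl v^{a,b} = \partial_c u$, recovering the claimed right-hand side.

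For \eqref{Stokes:B}, the four $B$-terms correspond to the four triangular faces of the tetrahedron $\Sim(i,j,k,l)$: the factor $\tfrac{1}{2}(x_a-x_b)\times(x_b-x_c)$ appearing in $B(a,b,c)$ is precisely the (signed) area vector of the face $\Sim(a,b,c)$. The plan is to verify that the signs in the given combination align with the outward orientations of the four faces, invoking the sign function $\sgn(i,j,k,l)$ from \eqref{eq:Gauss}, so that the left-hand side equals $\int \int_{\partial \Sim(i,j,k,l)} \nu\cdot w \dH^2(z) \dmu$ for the vector field
\[
w(z) := (Du)(z)(y-z), \qquad w_\beta(z) = \sum_a (\partial_a u_\beta)(z)\,(y-z)_a.
\]
Then \eqref{eq:Gauss} turns this into a volume integral of $\divergence_z w$. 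A direct computation yields
\[
\divergence_z w(z) = (y-z)\cdot\nabla(\divergence u)(z) - \divergence u(z),
\]
and both terms vanish distributionally once $\divergence u=0$, giving the identity.

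The principal obstacle is bookkeeping. For \eqref{Stokes:A} one has to identify the sign pattern of the left-hand side with a cyclic traversal of $\partial\Sim(i,j,k)$ and verify the delicate algebraic cancellation producing $\partial_c u$; for \eqref{Stokes:B} one must match the four signed $B$-terms with the outward normals of the tetrahedron's faces. The only analytic subtlety is that $u\in W^{1,p}$ is not smooth, but since every formula ultimately involves only first derivatives of $u$ (second derivatives cancel in the curl computation, and the identity for $\divergence_z w$ is purely first order), a density argument with smooth divergence-free approximations $u_n\to u$ in $W^{1,p}$ makes every step rigorous.
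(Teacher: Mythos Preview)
Your proposal is correct and follows essentially the same route as the paper's proof: for \eqref{Stokes:A} the paper also reduces to computing $\curl_z\bigl(Du_b(z)(y-z)_a - Du_a(z)(y-z)_b\bigr)$ and obtains $\partial_c u$ via the same cancellation of second-order terms and the use of $\divergence u=0$ in the $c$-component; for \eqref{Stokes:B} it likewise computes $\divergence_z\bigl(\sum_a \partial_a u (y-z)_a\bigr) = \sum_a \partial_a(\divergence u)(y-z)_a - \divergence u = 0$. The density reduction to smooth divergence-free $u$ (via mollification) is also the paper's first step.
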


\begin{proof}
    By a density argument it suffices to consider $u \in C^2(\R^3,\R^3)$ that is divergence-free, such $u$ can be obtained by convolution with mollifiers.
    
    By \eqref{eq:Stokes}, for \eqref{Stokes:A} and antisymmetry  $A_{ab}(ij)= -A_{ab}(ji)$, it suffices to compute
        \[
            \curl_z (D u_{b} (y-z)_a) \quad \text{and} \quad \curl_z (Du_{a} (y-z)_b).
        \]
    Let us take $a=1$, $b=2$ and $c=3$, the statement for other tuples follows by symmetry considerations. Then:
        \begin{align*}
            \curl_z (Du_2(y-z)_1) &= \bigl(\curl_z (Du_2) \bigr) (y-z)_1 + Du_2 \times D_z((y-z)_1) \\
            &=\left( \begin{array}{c} 0 \\ \partial_3 u_2 \\ - \partial_2 u_2\end{array} \right)
        \end{align*}
    The same computation for $\curl_z (Du_1(y-z)_2$ yields for divergence-free $u$:
         \begin{equation} \label{comp:1}
            \curl_z (Du_2(y-z)_1 - Du1(y-z)_2) = \left( \begin{array}{c} \partial_3 u_1 \\ \partial_3 u_2 \\ - \partial_2 u_2 - \partial_1 u_1 \end{array} \right) = \left( \begin{array}{c} \partial_3 u_1 \\ \partial_3 u_2 \\ \partial_3 u_3 \end{array} \right) = \partial_3 u.
        \end{equation}
    Hence, a direct application of \eqref{eq:Stokes} yields \eqref{Stokes:A}.
    
    For \eqref{Stokes:B}, by \eqref{eq:Gauss}, it is enough to compute the divergence of the integrand featured in the definition of $B(i,j,k)$, \eqref{def:Bijk}, i.e.
        \[
            \divergence_z( \sum_{a=1}^3 \partial_a u (y-z)_a)
        \]
    This is a straightforward computation. Note that 
        \begin{align*}
            \partial_1 ( \sum_{a=1}^3 \partial_a u_1 (y-z)_a) = \sum_{a=1}^3 \partial_1 \partial_a u_1 (y-z)_a - \partial_1 u_1.
        \end{align*}
    Therefore,
        \begin{align} \label{comp:2}
            \divergence_z( \sum_{a=1}^3 \partial_a u (y-z)_a) = \sum_{a=1}^3 \bigl[ \partial_a (\divergence u) (y-z)_a \bigr]- \divergence u =0
        \end{align}
    due to $\divergence u=0$. Consequently, the applciation of \eqref{eq:Gauss} directly gives \eqref{Stokes:B}.
\end{proof}

\subsection{Detailed proofs of Lemmas \ref{lem:divfree}-\ref{lem:badset}} \label{sec:proofs}

In this section, we present the proofs of the auxiliary lemmas that are used in the proof of Theorem \ref{sec3:main} (and therefore in the proof of Theorem \ref{thmintro:A}).

We start with showing that $T u$ is divergence free.

\begin{proof}[Proof of Lemma \ref{lem:divfree}]
Note that each summand appearing in the definitions $T$, $S$ and $R$ is in $C^{\infty}$ and is supported on $Q_i$ (and $Q_i \cap Q_j$, $Q_i \cap Q_j \cap Q_k$ for $S u$ and $R u$, respectively). Due to the properties of the cover, therefore only a finite number of terms are nonzero in a neighbourhood of $y \in X^C$. Therefore, $Tu \in C^{\infty}(X_{\lambda}^C)$ and we may compute the pointwise derivative in the classical sense.

\smallskip

\noindent \textbf{Computation of $\divergence T_0 u$:} 
    For simplicity, we compute $\partial_a (T_0 u)_a$ and then give the result for $\divergence T_0 u$:
        \begin{align*}
            \partial_a (T_0 u)_a &= \partial_a \sum_{i \in \N}  \phi_i(y) \int u_a(x_i) \dmu_i(x_i)\\
            &= \sum_{i \in \N} \partial_a \phi_i(y) \int u_a(x_i) \dmu_i(x_i) \\
            &= \sum_{i,j \in \N} \phi_j(y) \partial_a \phi_i(y) \left(\int u_a(x_i) \dmu_i(x_i) - \int u_a(x_j) \dmu_i(x_j) \right)  \\
            &= \sum_{i,j \in \N} \phi_j(y) \partial_a \phi_i(y) \int u_a(x_i) - u_a(x_j) \dmu_{i,j} \\
            & = \sum_{i,j \in \N} \phi_j(y) \partial_a \phi_i(y) \fint_{[x_i,x_j]} D u_a(z) (x_i-x_j) \dmu_{i,j}.
        \end{align*}
In particular we used that $\phi_i$ and $\phi_j$ are partitions of unity, $\mu_i$ and $\mu_j$ are probability measures and the fundamental theorem of calculus. Now, adding up all these terms for $a=1,2,3$ directly gives the divergence of $T_0 u$.

\smallskip

\noindent \textbf{Computation of $\divergence S u$:} 

We compute $\partial_a (Su)_a$ once. We then add up the terms $\partial_1 (Su)_1$, $\partial_2 (Su)_2$ and $\partial_3 (Su)_3$ and use some cancellations.

Note that
    \begin{align*}
         \partial_a A_{ab}(i,j) &= \int \fint_{[x_i,x_j]} Du_b(z) \cdot (x_i-x_j) \dH^1(z) \dmu_{i,j} \\
         \partial_a A_{ba}(i,j) &=0,
    \end{align*}
and, likewise for $A_{ac}$ and $A_{ca}$. Therefore,
    \begin{align*}
        \partial_a (Su)_a = -\frac{1}{2} & \Bigl[ \sum_{i,j \in \N} (\phi_j \partial_a \partial_b \phi_i +\partial_a \phi_j \partial_b \phi_i (A_{ab}(i,j) - A_{ba}(i,j)) \\
        & +\sum_{i,j \in \N} \phi_j \partial_a \partial_c \phi_i + \partial_a \partial_c \phi_i (A_{ac}(i,j) - A_{ca}(i,j)) \\
        &+\sum_{i,j \in \N} \phi_j \partial_b \phi_i  \partial_a A_{ab}(i,j) +   \phi_j \partial_c \phi_i  \partial_a A_{ac}(i,j)\Bigr].
    \end{align*}
Summing $\partial_1 (Su)_1 + \partial_2 (Su)_2 + \partial_3 (Su)_3$, note that terms featuring two derivatives of $\phi_i$, e.g. $\partial_1 \partial_2 \phi_i$, cancel out due to an antisymmetry argument. On the other hand, note that the terms $A_{ab}(i,j)-A_{ba}(i,j)$ are antisymmetric, both when exchanging $a$ and $b$ and when exchanging $i$ and $j$. Therefore, we conclude: 
    \begin{align*}
        \divergence Su &= \partial_1 (Su)_1 + \partial_2 (Su)_2 + \partial_3 (Su)_3 \\
        &= \Bigl[- \sum_{i,j \in \N} \partial_1 \phi_j \partial_2 \phi_i (A_{12}(i,j) - A_{21}(i,j)) - \sum_{i,j \in \N} \partial_2 \phi_j \partial_3 \phi_i (A_{23}(i,j) - A_{32}(i,j)) \\
        & - \sum_{i,j \in \N} \partial_1 \phi_j \partial_2 \phi_i (A_{12}(i,j) - A_{21}(i,j))\Bigr] \\
        &+\Bigl[ -\sum_{i,j \in \N} \phi_j \partial_1 \phi_i \int \fint Du_1(z) \cdot(x_i-x_j) \dH^1(z) \dmu_{i,j} \\
        & -\sum_{i,j \in \N} \phi_j \partial_2 \phi_i \int \fint Du_2(z) \cdot(x_i-x_j) \dH^1(z) \dmu_{i,j} \\&-\sum_{i,j \in \N} \phi_j \partial_3 \phi_i \int \fint Du_3(z) \cdot(x_i-x_j) \dH^1(z) \dmu_{i,j} \Bigr]
    \end{align*}
    
The second part of this sum is simply $- \divergence T_0 u$. Moreover, we can rewrite the first summand using a similar trick we used in the calculations for $\divergence T_0 u$:
    \begin{align*}
        &\sum_{i,j \in \N} \partial_1 \phi_j \partial_2 \phi_i A_{12}(i,j) = \sum_{i,j,k \in \N} \phi_k \partial_1 \phi_j \partial_2 \phi_i (A_{12}(i,j)-A_{12}(i,k)-A_{12}(k,j)).
     \end{align*}
    
One may now apply Stokes' theorem (cf. Lemma \ref{Stokesappl}) to obtain
    \begin{align*}
       & \sum_{i,j \in \N} \partial_1 \phi_j \partial_2 \phi_i (A_{12}(i,j) - A_{21}(i,j)) \\&= \sum_{i,j,k \in \N} \phi_k \partial_1 \phi_j \partial_2 \phi_i \int \fint_{\Sim(i,j,k)} \tfrac{1}{2} (x_i-x_j) \times (x_j-x_k) \cdot \partial_3 u \dH^2(z) \dmu_{i,j,k}. 
    \end{align*}
 Therefore, we get as an alternative formula for $\divergence Su$: 
    \begin{align}
        \divergence Su &= \sum_{i,j,k \in \N} \phi_k \partial_1 \phi_j \partial_2 \phi_i C_3(i,j,k) + \phi_k \partial_2 \phi_j \partial_3 \phi_i C_1(i,j,k) + \phi_k \partial_3 \phi_j \partial_i \phi_i C_2(i,j,k) \nonumber \\
        & - \divergence T_0 u, \label{eq:divS}
     \end{align}   
where, for $a \in \{1,2,3\} $ 
     \begin{equation}
         C_a(i,j,k) = \int \fint_{\Sim(i,j,k)} \tfrac{1}{2} (x_i-x_j) \times (x_j-x_k) \partial_a u \dH^2(z) \dmu_{i,j,k}.
     \end{equation}
  
\smallskip

\noindent \textbf{Computation of $\divergence R u$:} 
Again, we first compute $\partial_a (Ru)_a$ and then combine these and use cancellations. Note that 
    \[
        \partial_a B(i,j,k)(y) = \int \fint_{\Sim(i,j,k)}  \tfrac{1}{2} (x_i-x_j) \times (x_j-x_k) \cdot \partial_a u \dH^2(z) \dmu_{i,j,k} = C_a(i,j,k).
    \]
 Therefore,
    \begin{align*}
        \partial_a (R u)_a &=- \sum_{i,j,k \in \N} \Bigl( \partial_a \phi_k \partial_b \phi_j \partial_c \phi_i + \phi_k \partial_a \partial_b \phi_j \partial_c \phi_i + \phi_k \partial_b \phi_j \partial_c \partial_a \phi_i\Bigr) B(i,j,k) \\
        & - \sum_{i,j,k \in \N} \phi_j \partial_b \phi_j \partial_c \phi_i C_a(i,j,k).
    \end{align*}
When we compute the divergence, terms with two derivatives on $\phi_j$ and $\phi_i$ cancel out due to symmetry considerations: $B(i,j,k)$ is anti-symmetric in $i$, $j$ and $k$. In particular, 
    \[
        B(i,j,k) = B(j,k,i) = B(k,i,j) = -B(j,i,k) = - B(i,k,j) = -B(k,j,i).
    \]
This yields: 
    \begin{align*}
         \divergence Ru &= - 3 \sum_{i,j,k \in \N} \partial_1 \phi_k \partial_2 \phi_j \partial_3 \phi_i B(i,j,k) \\ &- \left(\sum_{i,j,k \in \N} 
        \phi_k \partial_1 \phi_j \partial_2 \phi_i C_3(i,j,k) + \phi_k \partial_2 \phi_j \partial_3 \phi_i C_1(i,j,k) + \phi_k \partial_3 \phi_j \partial_i \phi_i C_2(i,j,k) \right) \\
         &=  - 3 \sum_{i,j,k \in \N} \partial_1 \phi_k \partial_2 \phi_j \partial_3 \phi_i B(i,j,k) - (\divergence T_0 u + \divergence Su).
        \end{align*}
For the first summand, again we can use that $\phi_i$ is a partition of unity to introduce another index $l$: 
    \begin{align*}
        &- 3 \sum_{i,j,k \in \N} \partial_1 \phi_k \partial_2 \phi_j \partial_3 \phi_i B(i,j,k) \\
        &= -3 \sum_{i,j,k,l \in \N} \phi_l \partial_1 \phi_k \partial_2 \phi_j \partial_3 \phi_i (B(i,j,k)-B(l,j,k)-B(i,l,k)-B(i,j,l)).
    \end{align*}
 But this equals zero due to Stokes' theorem, cf. Lemma \ref{Stokesappl}. Therefore,
    \[
    \divergence Ru = - (\divergence T_0 u + \divergence Su)
    \]
and Lemma \ref{lem:divfree} is proven.    
\end{proof}
\begin{rem}
It is `hidden' in the computation above, why we restrict to $N=3$ for simplicity. We apply Stokes' theorem for $r$-dimensional simplices successively  for $r=1$ (the fundamental theorem), $r=2$ (`classical' 3D Stokes) and $r=3$ (Gau{\ss}-Green theorem). In higher dimension, one needs to apply Stokes' theorem $N$-times.  A similar calculation for lower regularity in that setting is carried out in \cite{Schiffer}.

\end{rem}
The next step is to show Lemma \ref{lem:Lipschitz}. A key point in Lemma \ref{lem:Lipschitz} is to find a good estimate on $A_{ab}$ and $B$. A key step of this estimate is the following lemma:

\begin{lem} \label{lem:help}
Let $Q_1=[-1/2,1/2]^N$, $Q_2$ be a cube of sidelength $1/4 \leq l(Q_2) \leq 4$ with $Q_2 \subset B_{10}(0)$. Suppose that $v \in L^1(B_{10}(0))$. Then:
\begin{enumerate} [label=(\alph*)]
    \item We can estimate 
        \begin{equation} \label{lem:help:1}
            \int_{Q_1} \int_{Q_2} \int_{[x_1,x_2]} \vert v(z) \vert \dH^1(z) \dx_j \dx_i \leq  C \int_{B_{10}(0)} \vert v(z) \vert \dL^3(z).
        \end{equation}        
    \item If, in addition, $Q_3$ is a cube with sidelength $1/4 \leq l(Q_3) \leq 4$, $Q_3 \subset B_{10}(0)$, then we may estimae
        \begin{equation} \label{lem:help:2}
            \int_{Q_1} \int_{Q_2} \int_{Q_3} \int_{\Sim(x_1,x_2,x_3)} \vert v(z) \vert \dH^1(z) \dx_j \dx_i \leq  C \int_{B_{10}(0)} \vert v(z) \vert \dL^3(z).
        \end{equation}
\end{enumerate}
\end{lem}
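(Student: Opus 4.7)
The strategy is Fubini combined with a volumetric change of variables that unfolds the parameterized family of segments (in (a)) or triangles (in (b)) back onto a fixed ambient domain containing $B_{10}(0)$. The key observation is that integrating over the segment or simplex in barycentric coordinates, and then substituting one of the cube-variables for the image point $z$, produces a Jacobian equal to a power of the corresponding barycentric coordinate; one just needs to arrange that this coordinate stays bounded away from zero on the region of integration.

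For (a), I would parameterize $[x_1,x_2]$ by $z = (1-t)x_1 + tx_2$, $t \in [0,1]$, so that $\dH^1(z) = |x_2 - x_1| \dt$; since $Q_1,Q_2 \subset B_{10}(0)$, this prefactor is at most $20$. By Fubini the left-hand side of \eqref{lem:help:1} becomes
\begin{equation*}
\int_0^1 \int_{Q_1} \int_{Q_2} |v((1-t)x_1 + tx_2)| \, |x_2 - x_1| \dx_2 \dx_1 \dt.
\end{equation*}
I would split the $t$-integral into $[0,1/2]$ and $[1/2,1]$. On the second interval, for fixed $t$ and $x_1$, substitute $x_2 \mapsto y := (1-t)x_1 + tx_2$; the Jacobian of this affine map is $t^3$, so $\dx_2 = t^{-3} \dy$. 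Convexity of $B_{10}(0)$ gives $y \in B_{10}(0)$, so the inner double integral is bounded by $t^{-3} |Q_1| \int_{B_{10}(0)} |v(y)| \dy$. Integrating $t^{-3} \leq 8$ over $[1/2,1]$ yields the claim on this half, and the complementary interval $[0,1/2]$ is handled symmetrically by instead substituting in $x_1$.

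For (b), I would use barycentric coordinates $z = t_1 x_1 + t_2 x_2 + t_3 x_3$ on the standard simplex $\Delta := \{(t_1,t_2,t_3) \colon t_i \geq 0,\ \sum t_i = 1\}$, taking $(t_2,t_3)$ as the free parameters with $t_1 = 1-t_2-t_3$. The area element is $\dH^2(z) = |(x_2-x_1)\times(x_3-x_1)|\dt_2\dt_3$, again uniformly bounded thanks to $Q_i \subset B_{10}(0)$. I would then split $\Delta$ into the three subregions $\{t_k \geq 1/3\}$ for $k = 1,2,3$. In the subregion $t_3 \geq 1/3$, for fixed $(t_1,t_2,t_3,x_1,x_2)$, the affine map $x_3 \mapsto z$ has Jacobian $t_3^3 \geq 27^{-1}$, so $\dx_3 = t_3^{-3} \dz \leq 27 \dz$, with $z \in B_{10}(0)$ by convexity. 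Integrating the remaining (bounded) cube-variables and barycentric variables then produces \eqref{lem:help:2}, and the other two subregions are handled by the same argument with the roles of $x_1,x_2,x_3$ permuted.

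The only real obstacle is the degenerate configuration where the barycentric coordinate of the substituted variable approaches zero, since the Jacobian factor $t_k^{-3}$ blows up precisely when the image point $z$ collapses to another vertex of the segment or simplex. Splitting the parameter domain so that one always substitutes in the variable whose barycentric coordinate is bounded away from zero is what circumvents this, and it also explains the role of the lower bound $1/4$ on the cube sidelengths in the hypothesis. Beyond this, every step is a routine application of Fubini and the change-of-variables formula.
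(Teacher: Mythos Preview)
Your argument is correct, but it is genuinely different from the paper's. The paper fixes $x_1$, passes to polar coordinates in $x_2$ about $x_1$, and after the substitution $s=tr$ arrives at the Riesz-potential-type bound
\[
\int_{Q_2}\int_{[x_1,x_2]}\vert v(z)\vert\,\dH^1\,\dx_2\ \le\ C\int_{B_{11}(x_1)}\frac{\vert v(z)\vert}{\vert z-x_1\vert}\dz,
\]
and then uses Fubini together with the local integrability of $\vert z-x_1\vert^{-1}$ over $Q_1$. For (b) it repeats the trick with cylindrical coordinates about the line through $x_1,x_2$, producing the weight $\dist(z,[x_1,x_2])^{-1}$. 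Your approach avoids singular weights entirely: you split the barycentric parameter domain into pieces on which one coordinate is bounded below, and on each piece perform an affine substitution in the corresponding cube-variable with Jacobian $t_k^{-3}\le C$. This is more elementary, uses only the affine change-of-variables formula, and extends mechanically to higher-order simplices in any dimension. The paper's route, by contrast, yields the sharper pointwise Riesz-type estimate as a by-product, though that extra information is not used downstream here. One small remark: your closing sentence about the lower bound $1/4$ on the sidelengths is not quite right---neither your argument nor the paper's actually uses it (only the containment $Q_i\subset B_{10}(0)$ matters), so that hypothesis is in fact superfluous for this lemma.
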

The proof of Lemma \ref{lem:help} is an application of Fubini/transformation rules. First, we see how this Lemma now shows the desired Lemma \ref{lem:Lipschitz}.

\begin{proof}[Proof of Lemma \ref{lem:Lipschitz}]
The first claim \ref{lem:Lipschitz:item1} follows from the fact that $T_0 u$ is a Lipschitz truncation, cf. Lemma \ref{lem:Lipschitztrunc} and \cite{Zhang}.

For \ref{lem:Lipschitz:item2} and \ref{lem:Lipschitz:item3}, we estimate the values of single summands. We start with $(Su)$. Recall that 
    \begin{align} \label{est:phi}
        \Vert \phi_i \Vert_{L^{\infty}} \leq 1,\quad \Vert D \phi_i \Vert_{L^{\infty}} \leq C l(Q_i)^{-1}, \quad \Vert D^2 \phi_i \Vert_{L^{\infty}} \leq C l(Q_i)^{-2}.
    \end{align}
Moreover, if $Q_i \cap Q_j \neq \emptyset$, the length $l(Q_i)$ and $l(Q_j)$ are comparable. Whenever $y \in Q_i$, we have $\vert y - z \vert \leq C l(Q_i)$ in below integrand and, therefore
    \begin{align}
        \vert A_{\alpha \beta}(i,j) (y) \vert& \leq  \int \int_{[x_i,x_j]} \vert Du_{\beta}(z)\vert \vert y- z \vert_{\alpha} \dH^1(z) \dmu_{i,j} \nonumber
        \\
        &\leq C l(Q_i) \cdot \frac{1}{l(Q_i)^6} \int_{1/2 Q_i^{\ast}} \int_{1/2 Q_j^{\ast}} \int_{[x_i,x_j]} \vert D u (z) \vert \dH^1(z) \dx_j \dx_i \nonumber
        \\
        &= \frac{C}{l(Q_i)^5} \int_{1/2 Q_i^{\ast}} \int_{1/2 Q_j^{\ast}} \int_{[x_i,x_j]} \vert D u(z) \vert \dH^1(z) \dx_j  \dx_i \label{calc:1}
    \end{align}
 Note that the integral scales with power $3+3+1=7$ in $l(Q_i)$ (as $l(Q_i)$ and the length of $[x_i,x_j]$ are comparable to $l(Q_i)$). Therefore, applying Lemma \ref{lem:help}, we get
    \begin{align*}
         \int_{1/2 Q_i^{\ast}} \int_{1/2 Q_j^{\ast}} \int_{[x_i,x_j]} \vert D u(z) \vert \dH^1(z) \dx_j  \dx_i &\leq C l(Q_i)^7 \fint_{B_{10l(Q_i)}(c_i)} \Vert Du (z) \vert \dL^3(z) \\& \leq Cl(Q_i)^7 \M (Du)(z_i) \leq Cl(Q_i)^7 \lambda.
    \end{align*}
Conbining this with \eqref{calc:1} yields the estimate 
    \[
        \vert A_{\alpha \beta} (i,j) (y) \vert \leq C \lambda l(Q_i)^2.
    \]
The same calculation for the derivative of $A_{\alpha \beta}$ instead of $A_{\alpha \beta}$ itself (Note that there we miss the additional decay through $\vert y- z \vert^{\alpha}$):    
    \begin{equation}
      \vert  \partial_{\gamma} A_{\alpha \beta}(i,j) \vert \leq C \lambda  l(Q_i).
    \end{equation}
Using this, we may estimate 
    \begin{align*}
         \Vert \phi_j \partial_b \phi_i A_{ab}(i,j) \Vert_{L^{\infty}}& \leq C \lambda, \\
         \Vert D( \phi_j \partial_b \phi_i A_{ab}(i,j) \Vert_{L^{\infty}} & \leq \Vert D \phi_j \partial_b  \phi_i A_{ab}(i,j) \Vert_{L^{\infty}} + \Vert \phi_j D \partial_b \phi_i A_{ab}(i,j) \Vert_{L^{\infty}} \\& \quad + \Vert \phi_j \partial_b \phi_i D A_{ab}(i,j) \Vert_{L^{\infty}} \\ &\leq C \lambda
    \end{align*}
and we conclude that single summands in the definition of $Su$ have $W^{1,\infty}$-norm bounded by some uniform constant $C$.

Consequently, for any $i,j$, $\phi_j \partial_b \phi_i A_{ab}(i,j) \in C_c^{\infty}(X^C)$ and we have the estimate
    \[
        \Vert \phi_j \partial_b \phi_i A_{ab}(i,j) \Vert_{W^{1,1}} \leq \LL^N(Q_i \cap Q_j) C \lambda.
    \]
A point $y \in X^C$ is contained in at most $C_3$ cubes for a dimensional constant $C_3$. Therefore,
    \[
        \sum_{i,j \in \N} \Vert \phi_j \partial_b \phi_i A_{ab}(i,j) \Vert_{W^{1,1}} \leq \sum_{i,j \in \N} \LL^N(Q_i\cap Q_j) C \leq C_3^1 \LL^3(X^C) C \lambda.
    \]
As $\LL^3(X^C)$ is bounded (cf. Lemma \ref{lem:badset}), we establish absolute convergence in $W^{1,1}$ and, therefore, $Su \in W^{1,1}_0(X^C)$. The $W^{1,\infty}$-bound of $Su$ then follows by the observation, that for any $y \in X^C$, only $C_3^2$ terms might be non-zero. Therefore,
    \[
        \Vert \sum_{i,j \in \N} \phi_j \partial_b \phi_i A_{ab}(i,j) \Vert_{W^{1,\infty}} \leq C_3^2 \lambda,
    \]
and the same bound follows for $\Vert Su  \Vert_{W^{1,\infty}}$.

\smallskip

Now, we bound the $W^{1,\infty}$-norm of $(Ru)$. To this end, whenever $y \in Q_i \cap Q_j \cap Q_k \neq \emptyset$, similar to  \eqref{calc:1}, we have the estimate 
    \begin{align}
        \vert B(i,j,k) \vert &\leq \int \int_{\Sim(i,j,k)} \vert Du(z) \vert \vert y -z\vert \dH^2(z) \dmu_{i,j,k} \nonumber
        \\
        &\leq \frac{C}{l(Q_i)^8} \int_{Q_i} \int_{Q_j} \int_{Q_k}  \int_{\Sim(i,j,k)} \vert Du(z) \vert \dH^2(z) \dx_k \dx_j \dx_i.  \label{calc:2}
     \end{align}
Note that the integral scales with power $3+3+3+2=11$ in the sidelength of the cube $(Q_i)$; again note that the sidelength of $Q_j$ and $Q_k$ are comparable to $l(Q_i)$. Therefore, applying the second part of Lemma \ref{lem:help}:    
     \begin{align*}
         \int_{Q_i} \int_{Q_j} \int_{Q_k}  \int_{\Sim(i,j,k)} \vert Du(z) \vert \dH^2(z) \dx_k \dx_j \dx_i &\leq C l(Q_i)^{11} \int_{B_{10l(Q_i)}(c_i)} \vert Du(z) \vert \dz \\
         &\leq Cl(Q_i)^{11} \M(Du)(x_i) \leq Cl(Q_i)^{11} \lambda
     \end{align*}
Combining this estimate with \eqref{calc:2} gives 
  \begin{align} \label{calc:22}
        \vert B(i,j,k) \vert \leq Cl(Q_i)^3 \lambda.
    \end{align} 
The same argumentation for the derivative of $B(i,j,k)$ instead also gives \begin{equation} \label{calc:3}
    \vert D_y B(i,j,k) \vert \leq C l(Q_i)^2 \lambda.
\end{equation}   
We conclude by \eqref{calc:22} and \eqref{est:phi} that for the $L^{\infty}$-norm of a single summand we have the estimate \begin{equation}
    \Vert \phi_k \partial_b \phi_j \partial_c \phi_i B(i,j,k) \Vert_{L^{\infty}} \leq C \lambda.
\end{equation}
Moreover, for the derivative of a single summand we get by \eqref{calc:22}, \eqref{est:phi} and \eqref{calc:3}
    \begin{align}
        \Vert D(\phi_k \partial_b \phi_j \partial_c \phi_i B(i,j,k)\Vert_{L^{\infty}}& \leq \Vert D\phi_k \partial_b \phi_j \partial_c \phi_i B(i,j,k) \Vert_{L^{\infty}} +
        \Vert \phi_k \partial_b D\phi_j \partial_c \phi_i B(i,j,k) \Vert_{L^{\infty}} \nonumber \\
        &+ \Vert \phi_k \partial_b \phi_j D\partial_c \phi_i B(i,j,k) \Vert_{L^{\infty}} + \Vert \phi_k \partial_b \phi_j \partial_c \phi_i DB(i,j,k) \Vert_{L^{\infty}} \nonumber \\
        &\leq C \lambda. \label{est:3}
    \end{align}
    
Therefore, we can infer for the $W^{1,1}$ norm of a nonzero summand: 
    \begin{equation*}
        \Vert \phi_k \partial_b \phi_j \partial_c \phi_i B(i,j,k) \Vert_{W^{1,1}} \leq \LL^3(Q_i \cap Q_j \cap Q_k) C \lambda.
    \end{equation*}
As any $y \in Z^C$ is contatined in at most $C_3$ cubes $Q_j$, it is at most contained in $C_3^3$ sets of the form $Q_i \cap Q_j \cap Q_k$, and therefore:
    \begin{equation} \label{R:absconv}
        \sum_{i,j,k \in \N} \Vert \phi_k \partial_b \phi_j \partial_c \phi_i B(i,j,k) \Vert_{W^{1,1}} \leq C_3^3 \LL^3(X^C) C \lambda.
    \end{equation}
This establishes absolute $W^{1,1}$-convergence, and thus $(Ru) \in W^{1,1}_0(X^C)$. As the sum is finite locally with at most $C_3^3$ nonzero summands, we then also conclude
    \begin{equation} \label{R_bound}
        \Vert R u \Vert_{W^{1,\infty}} \leq C_3^2 C \lambda.
    \end{equation}
   
\end{proof}
We now prove Lemma \ref{lem:help}.

\begin{proof}
We first prove \eqref{lem:help:1}. We simply set $v \equiv 0$ outside of $B_ {10}(0)$. Using that $Q_2 \subset B_{10}(0)$ we may estimate
\[
 \int_{Q_1} \int_{Q_2} \int_{[x_1,x_2]} \vert v(z) \vert \dH^1(z) \dx_2 \dx_1  \leq \int_{Q_1} \int_{B_{11}(x_1)} \int_0^1 \vert v((1-t)x_1+tx_2) \vert \cdot \vert x_1-x_2 \vert \dt \dx_2 \dx_1.
\]
Fix $x_1 \in Q_1$ and only consider the two inner integrals. We apply polar coordinates and then the transformation $tr =s$:

    \begin{align*}
        &\int_{B_{11}(x_1)}  \int_0^1 \vert v((1-t)x_1+tx_2) \vert \cdot \vert x_1-x_2 \vert \dt \dx_2= \int_0^{11} r^2 \int_{\mathbb{S}^2} \int_0^1 \vert v(x_1+tr\Theta) \vert \dt \dH^2(\Theta) \dr \\
        &\quad = \int_{\mathbb{S}^2} \int_0^{11} \int_0^r \vert v(x_1+s \Theta) s r \ds \dr \dH^2(\Theta) = \int_{\mathbb{S}^2} \int_0^{11} \int_{s}^11 \vert v(x_1+s\Theta) \vert sr \dr \ds \dH^2(\Theta) \\
        & \quad \leq C \int_{\mathbb{S}^2} \int_0^{11} \vert v(x_1+s \Theta) \vert s \ds \dH^2(\Theta) \leq C \int_{B_{11}(x_1)} \frac{\vert v(z) \vert}{\vert z -x_1\vert} \dz.
    \end{align*}
Now computing the integral over $x_1$, we achieve:
    \begin{align*}
         \int_{Q_1}& \int_{Q_2} \int_{[x_1,x_2]} \vert v(z) \vert \dH^1(z) \dx_2 \dx_1\leq  C \int_{Q_1} \int_{B_{11}(x_1)} \frac{\vert v(z) \vert}{\vert z -x_1\vert} \dz \\
         & \leq C \int_{B_{10}(0)} \vert v(z) \vert \int_{Q_1} \vert z-x_1 \vert^{-1} \dx_1 \dz \leq  C \int_{B_{10}(0)} \vert v(z) \vert \int_{B_{10}(0)} \vert y \vert^{-1} \dy \dz \\
         &\leq   C \int_{B_{10}(0)} \vert v(z) \vert \dz.
    \end{align*}
The computation of the right hand side of \eqref{lem:help:2} 
\[
\int_{Q_1} \int_{Q_2} \int_{Q_3} \int_{\Sim(1,2,3)} \vert v(z) \vert \dH^2(z) \dx_3 \dx_2 \dx_1 =(\ast)
\]
is very similar. Fixing $x_1$ and $x_2$, denote by $[x_1,x_2]$ the straight line through $x_1$ and $x_2$. Applying cylindrical coordinates around the axis $[ x_1,x_2 ]$ we find
\[
(\ast) \leq C\int_{Q_1} \int_{Q_2} \int_{B_{10}(0)} \frac{\vert v(z) \vert}{\dist(z,[ x_1,x_2])} \dz \dx_1 \dx_2
\]
Application of Fubini and the estimate 
    \[
     \int_{Q_1} \int_{Q_2} \dist^{-1}(z,[x_1,x_2 ] ) \dx_2 \dx_1 \leq \int_{B_{10}(0)} \int_{B_{10}(0)} \dist^{-1}(0, [ x_1,x_2 ]) \dx_2 \dx_1  \leq C 
    \]
yields
    \[
    (\ast) \leq C \int_{B_{10}(0)} \vert v(z) \vert \dz.
    \]
\end{proof}

It remains to prove Lemma \ref{lem:badset}. The proof is already contained in \textsc{Zhang}'s paper \cite{Zhang}. For the sake of completeness we outline the short argument.
\begin{proof}[Proof]
    It suffices to show that if $v \in L^p(\R^N,\R^d)$, that then 
        \begin{equation} \label{badset:claim}
            \LL^N(\{ \M v > \lambda\}) \leq C \lambda^{-p} \int_{\{\vert v \vert \geq \lambda/2\}} \vert v \vert^p \dx.
        \end{equation}
    To achieve \eqref{badset:claim}, define $\tilde{v}$ as follows: 
        \[
            \tilde{v}= \left\{ \begin{array}{ll} \vert v \vert -\lambda/2 & \vert v \vert \geq \lambda/2 \\ 0 & \vert v \vert \leq \lambda/2
            \end{array} \right.
        \]
    Then $\{\M v >\lambda \} \subset \{ \M \tilde{v} >\lambda/2\}$ by checking the definition of the maximal function. We conclude by the weak-type estimate for the maximal function, Lemma \ref{lem:maximalf},
        \begin{align*}
            \LL^N(\{ \M v > \lambda \}) \leq \LL^N(\M \tilde{v} > \lambda/2) \leq C\lambda^{-p} \Vert \tilde{v}\Vert_{L^p}^p
                                        \leq C\int_{\V v \vert > \lambda/2} \vert v \vert^p \dx.
        \end{align*}    
\end{proof}

\bibliography{biblio_sol.bib}

\begin{thebibliography}{10}

\bibitem{AF}
E.~Acerbi and N.~Fusco.
\newblock Semicontinuity problems in the calculus of variations.
\newblock {\em Arch. Rational Mech. Anal.}, 86(2):125--145, 1984.

\bibitem{AF2}
E.~Acerbi and N.~Fusco.
\newblock An approximation lemma for {$W^{1,p}$} functions.
\newblock In {\em Material instabilities in continuum mechanics ({E}dinburgh,
  1985--1986)}, Oxford Sci. Publ., pages 1--5. Oxford Univ. Press, New York,
  1988.

\bibitem{BR}
E.~B\"{a}umle and M.~R\r{u}\v{z}i\v{c}ka.
\newblock Existence of weak solutions for unsteady motions of micropolar
  electrorheological fluids.
\newblock {\em SIAM J. Math. Anal.}, 49(1):115--141, 2017.

\bibitem{BGS}
L.~Behn, F.~Gmeineder, and S.~Schiffer.
\newblock On symmetric div-quasiconvex hulls and divsym-free
  {$L^{\infty}$}-truncations.
\newblock {\em https://arxiv.org/abs/2108.05757}, 2021.

\bibitem{Bogovski}
M.~E. Bogovski\u{\i}.
\newblock Solution of the first boundary value problem for an equation of
  continuity of an incompressible medium.
\newblock {\em Dokl. Akad. Nauk SSSR}, 248(5):1037--1040, 1979.

\bibitem{BDF}
D.~Breit, L.~Diening, and M.~Fuchs.
\newblock Solenoidal {L}ipschitz truncation and applications in fluid
  mechanics.
\newblock {\em J. Differential Equations}, 253(6):1910--1942, 2012.

\bibitem{BDG}
D.~Breit, L.~Diening, and F.~Gmeineder.
\newblock The {L}ipschitz truncation of functions of bounded variation.
\newblock {\em Indiana Univ. Math. J.}, 70(6):2237--2260, 2021.

\bibitem{BDS}
D.~Breit, L.~Diening, and S.~Schwarzacher.
\newblock Solenoidal {L}ipschitz truncation for parabolic {PDE}s.
\newblock {\em Math. Models Methods Appl. Sci.}, 23(14):2671--2700, 2013.

\bibitem{DLM}
K.~de~Leeuw and H.~Mirkil.
\newblock Majorations dans {L{$_{\infty }$}} des op\'{e}rateurs
  diff\'{e}rentiels \`a coefficients constants.
\newblock {\em C. R. Acad. Sci. Paris}, 254:2286--2288, 1962.

\bibitem{DMS}
L.~Diening, J.~M\'{a}lek, and M.~Steinhauer.
\newblock On {L}ipschitz truncations of {S}obolev functions (with variable
  exponent) and their selected applications.
\newblock {\em ESAIM Control Optim. Calc. Var.}, 14(2):211--232, 2008.

\bibitem{DSS}
L.~Diening, S.~Schwarzacher, B.~Stroffolini, and A.~Verde.
\newblock Parabolic {L}ipschitz truncation and caloric approximation.
\newblock {\em Calc. Var. Partial Differential Equations}, 56(4):Paper No. 120,
  27, 2017.

\bibitem{Gallenm}
D.~Gallenm\"{u}ller.
\newblock M\"{u}ller-{Z}hang truncation for general linear constraints with
  first or second order potential.
\newblock {\em Calc. Var. Partial Differential Equations}, 60(3):Paper No. 118,
  25, 2021.

\bibitem{Kirszbraun}
M.~D. Kirszbraun.
\newblock {\"Uber die zusammenziehende und Lipschitzsche Transformation}.
\newblock {\em Fund. Math.}, 22:77--108, 1934.

\bibitem{Liu}
F.~C. Liu.
\newblock A {L}uzin type property of {S}obolev functions.
\newblock {\em Indiana Univ. Math. J.}, 26(4):645--651, 1977.

\bibitem{McS}
E.~J. McShane.
\newblock Extension of range of functions.
\newblock {\em Bull. Amer. Math. Soc.}, 40(12):837--842, 1934.

\bibitem{MS}
C.~M\^{\i}ndril\u{a} and S.~Schwarzacher.
\newblock Existence of steady very weak solutions to {N}avier-{S}tokes
  equations with non-{N}ewtonian stress tensors.
\newblock {\em J. Differential Equations}, 279:10--45, 2021.

\bibitem{NR}
P.~N\"{a}gele and M.~R\r{u}\v{z}i\v{c}ka.
\newblock Generalized {N}ewtonian fluids in moving domains.
\newblock {\em J. Differential Equations}, 264(2):835--866, 2018.

\bibitem{Ornstein}
D.~Ornstein.
\newblock A non-equality for differential operators in the {$L_{1}$} norm.
\newblock {\em Arch. Rational Mech. Anal.}, 11:40--49, 1962.

\bibitem{Schiffer}
S.~Schiffer.
\newblock {$L^\infty$}-truncation of closed differential forms.
\newblock {\em Calc. Var. Partial Differential Equations}, 61(4):Paper No. 135,
  2022.

\bibitem{Stein}
E.~Stein.
\newblock {\em {Singular Integrals and Differentiability Properties of
  Functions}}.
\newblock Princeton University Press, 1971.

\bibitem{Whitney}
H.~Whitney.
\newblock Analytic extensions of differentiable functions defined in closed
  sets.
\newblock {\em Trans. Amer. Math. Soc.}, 36(1):63--89, 1934.

\bibitem{Zhang}
K.~Zhang.
\newblock A construction of quasiconvex functions with linear growth at
  infinity.
\newblock {\em Ann. Scuola Norm. Sup. Pisa Cl. Sci. (4)}, 19(3):313--326, 1992.

\end{thebibliography}
\bibliographystyle{abbrv}

\end{document}